\documentclass{amsart}%
\usepackage{amssymb}
\usepackage{amsfonts}
\usepackage{amsmath}
\usepackage{graphicx}%
\setcounter{MaxMatrixCols}{30}
\providecommand{\U}[1]{\protect\rule{.1in}{.1in}}
\newtheorem{theorem}{Theorem}
\theoremstyle{plain}

\newtheorem{corollary}{Corollary}

\newtheorem{definition}{Definition}
\newtheorem{example}{Example}

\newtheorem{lemma}{Lemma}

\newtheorem{proposition}{Proposition}

\numberwithin{equation}{section}
\begin{document}
\title{S-PRIME AND S-WEAKLY PRIME SUBMODULES}
\author{Emel ASLANKARAYIGIT UGURLU}
\address{Department of Mathematics, Marmara University, Kadikoy, Istanbul, 34722, Turkey}
\email{emel.aslankarayigit@marmara.edu.tr}
\date{2 September, 2019}
\subjclass[2000]{Primary 06F10 Secondary 06F05}
\keywords{$S$-$prime$ submodule, $S$-$weakly$ $prime$ submodule, multiplication module}

\begin{abstract}
In this study, all rings are commutative with non-zero identity and all
modules are considered to be unital. Let $M$ be a left $R$-module. A proper
submodule $N$ of $M$ is called an $S$-$weakly$ $prime$ submodule if $0_{M}\neq
f(m)\in N$ implies that either $m\in N$ or $f(M)\subseteq N,$ where $f\in
S=End(M)$ and $m\in M.$ Some results concerning $S$-prime and $S$-weakly prime
submodules are obtained. Then we study $S$-prime and $S$-weakly prime
submodules of multiplication modules. Also for $R$-modules $M_{1}$ and
$M_{2},$ we examine $S$-prime and $S$-weakly prime submodules of
$M=M_{1}\times M_{2},$ where $S=S_{1}\times S_{2},$ $S_{1}=End(M_{1})$ and
$S_{2}=End(M_{2}).$

\end{abstract}
\maketitle

\section{Introduction}

Throughout this paper $R$ will denote a commutative ring with a non-zero
identity and $M$ is considered to be unital left $R$-module. A proper ideal
$P$ of $R$ is said to be \textit{prime} if $ab\in P$ implies $a\in P$ or $b\in
P$, \cite{atiyah}. Weakly prime ideals in a commutative ring with non-zero
identity have been introduced and studied by D. D. Anderson and E. Smith in
\cite{DE2003}. A proper ideal $P$ of $R$ is said to be \textit{weakly prime}
if $0_{R}\neq ab\in P$ implies $a\in P$ or $b\in P$. Several authors have
extended the notion of prime ideals to modules, see, for example,
\cite{JD1978,L,MM}. In \cite{RA2003}, a proper submodule $N$ of a module $M$
over a commutative ring $R$ is said to be \textit{prime} submodule if whenever
$rm\in N$ for some $r\in R,m\in M$, then $m\in N$ or $rM\subseteq N.$ Then in
\cite{SF2007}, S. Ebrahimi and F. Farzalipour introduced weakly prime
submodules over a commutative ring $R$ as following: A proper submodule $N$ of
$M$ is called \textit{weakly prime} if for $r\in R$ and $m\in M$ with
$0_{M}\neq rm\in N$, then $m\in N$ or $rM\subseteq N.$ Clearly, every prime
submodule of a module is a weakly prime submodule. However, since $0_{M}$ is
always weakly prime, a weakly prime submodule need not be prime. Various
properties of weakly prime submodules are considered in \cite{SF2007}.

Now we define the concepts the residue of $N$ by $M$. If $N$ is a submodule of
an $R$-module $M$, the ideal $\{r\in R:rM\subseteq N\}$ is called the residue
of $N$ by $M$ and it is denoted by $(N:_{R}M).$ In particular, $(0_{M}:_{R}M)$
is called the \textit{annihilator }of $M$ and denoted by $Ann(M),$ see
\cite{ZP1988}. If the annihilator of $M$ equals to $0_{R},$ then $M$ is called
a \textit{faithful module}. Also, for a proper submodule $N$ of $M$, the
\textit{radical} of $N,$ denoted by $\sqrt{N},$ is defined to be the
intersection of all prime submodules of $M$ containing $N.$ If there is no
prime submodule containing $N$, then $\sqrt{N}=M,$ see \cite{ZP1988}.

An $R$-module $M$ is called a \textit{multiplication module} if every
submodule $N$ of $M$ has the form $IM$ for some ideal $I$ of $R$, see
\cite{ZP1988}. Note that, since $I\subseteq(N:_{R}M)$ then $N=IM\subseteq
(N:_{R}M)M\subseteq N$. So, if $M$ is multiplication, $N=(N:_{R}M)M$, for
every submodule $N$ of $M.$ Let $N$ and $K$ be submodules of a multiplication
$R$-module $M$ with $N=I_{1}M$ and $K=I_{2}M$ for some ideals $I_{1}$ and
$I_{2}$ of $R$. The product of $N$ and $K$ denoted by $NK$ is defined by
$NK=I_{1}I_{2}M$. Then by \cite[Theorem 3.4]{RA2003}, the product of $N$ and
$K$ is independent of presentations of $N$ and $K$. Note that, for
$m,m^{\prime}\in M$, by $mm^{\prime}$, we mean the product of $Rm$ and
$Rm^{\prime}$. Clearly, $NK$ is a submodule of $M$ and $NK\subseteq N\cap K,$
see \cite{RA2003}. Also, if $M$ is multiplication\ module, in Theorem 3.13 of
\cite{RA2003}, R. Ameri showed that $\sqrt{N}=\{m\in M:m^{k}\subseteq N$ for
some positive integer $k\}.$

This paper is inspired by the notion of $S$-prime submodule which appears in
\cite{GG2000, SOD2010}. The authors defined the concept as following: A proper
submodule $N$ of an $R$-module $M$ is said to be $S$-$prime$ submodule of $M$
if $f(m)\in N$ implies that either $m\in N$ or $f(M)\subseteq N,$ where $f\in
S=End(M)$ and $m\in M.$ Every $S$-$prime$ submodule is prime, see
\cite{GG2000}. For more information one can examine \cite{SOD2010}.

In this study we introduce the concept of \textit{S-weakly prime} submodule as
following: A proper submodule $N$ of an $R$-module $M$ is said to be
$S$-$weakly$ $prime$ submodule of $M$ if $0_{M}\neq f(m)\in N$ implies that
either $m\in N$ or $f(M)\subseteq N,$ where $f\in S=End(M)$ and $m\in M.$
Clearly, every $S$-$prime$ submodule is an $S$-$weakly$ $prime$ submodule. In
Proposition \ref{pro1}, it is obtained that every $S$-$weakly$ $prime$
submodule of an $R$-module $M$ is a $weakly$ $prime$ submodule. However,
we\ show that the opposite of Proposition \ref{pro1} is not correct, see
Example \ref{exa1}. Then we prove in Proposition \ref{pro4} (Proposition
\ref{pro5}) that $N$ is an $S$-prime ($S$-weakly prime) submodule if and only
if $f(K)\subseteq N$ $(0_{M}\neq f(K)\subseteq N)$ implies that either
$K\subseteq N$ or $f(M)\subseteq N,$ where $f\in S=End(M)$ and $K$ is a
submodule of $M.$ Also, we give characterizations of $S$-$prime$ submodule and
$S$-$weakly$ $prime$ submodule (Theorem \ref{theS},\ Theorem \ref{the1},
recpectively). In Corollary \ref{COR1}, by the help of Proposition \ref{pro3},
it is proved that when $N$ is an $S$-weakly prime submodule, then $(N:_{R}M)$
is an $S$-weakly prime ideal of $R.$ In multiplication module, we obtain
another characterizations for $S$-$prime$ submodule and $S$-$weakly$ $prime$
submodule (Theorem \ref{the mult1},\ Theorem \ref{the mult2}, recpectively).
Among the other results, some properties of $S$-$prime$ and $S$-$weakly$
$prime$ submodules in multiplication modules are obtained. Moreover, we
characterize $S$-$prime$ and $S$-$weakly$ $prime$ submodules of $M=M_{1}\times
M_{2}$ over $R$-module, where $M_{1},M_{2}$ be $R$-modules, see Theorem
\ref{prokart}, Theorem \ref{thekart}, Proposition \ref{pro1in2}, Proposition
\ref{pro2in2}. Finally, we obtain a relation between $S$-$prime$ and
$S$-$weakly$ $prime$ submodules of $M=M_{1}\times M_{2}$ over $R=R_{1}\times
R_{2}$-module, where $M_{1},M_{2}$ are $R_{1}$-module and $R_{2}$-module,
recpectively, see, Theorem \ref{theD1} and Theorem \ref{theD2}.

\section{S-PRIME AND S-WEAKLY PRIME SUBMODULES}

Throughout this study $End(M)$ is denoted by $S.$

\begin{definition}
A proper submodule $N$ of an $R$-module $M$ is said to be $S$-$weakly$ $prime
$ submodule of $M$ if $0_{M}\neq f(m)\in N$ implies that either $m\in N $ or
$f(M)\subseteq N,$ where $f\in S=End(M)$ and $m\in M.$
\end{definition}

It is clear that every $S$-$prime$ submodule is an $S$-$weakly$ $prime$
submodule. However, since $\{0_{M}\}$ is an $S$-$weakly$ $prime$ submodule of
$M,$ then an $S$-$weakly$ $prime$ submodule may not be an $S$-$prime$ submodule.

\bigskip Note that if we consider any $R$ as an $R$-module, then a proper
ideal $I$ of $R$ is said to be $S$-$prime$ $(S$-$weakly$ $prime)$ ideal if
$f(a)\in I$ $(0_{R}\neq f(a)\in I$) implies that either $a\in I$ or
$f(R)\subseteq I,$ where $f\in S=End(R)$ and $a\in R.$

\begin{proposition}
\label{pro1}Every $S$-$weakly$ $prime$ submodule of an $R$-module $M$ is a
$weakly$ $prime$ submodule.
\end{proposition}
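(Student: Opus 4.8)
The plan is to reduce the weakly prime condition, which involves multiplication by a ring element $r\in R$, to the $S$-weakly prime condition, which involves an endomorphism $f\in S=End(M)$. The bridge between the two is the elementary observation that, because $R$ is commutative, scalar multiplication by any fixed $r\in R$ is itself an $R$-module endomorphism of $M$, hence an element of $S$.

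Concretely, I would fix $r\in R$ and $m\in M$ with $0_{M}\neq rm\in N$, as required in the definition of weakly prime, and then define the map $f_{r}\colon M\to M$ by $f_{r}(x)=rx$. The first task is to verify that $f_{r}\in S$: additivity, $f_{r}(x+y)=r(x+y)=rx+ry=f_{r}(x)+f_{r}(y)$, is immediate, while $R$-linearity, $f_{r}(sx)=r(sx)=(rs)x=(sr)x=s(rx)=sf_{r}(x)$, is exactly the step in which commutativity of $R$ is used. With $f_{r}$ established as a genuine endomorphism, I would observe that $f_{r}(m)=rm$, so the hypothesis $0_{M}\neq rm\in N$ reads $0_{M}\neq f_{r}(m)\in N$.

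Applying the assumption that $N$ is $S$-weakly prime to the particular endomorphism $f_{r}$ then yields $m\in N$ or $f_{r}(M)\subseteq N$. Since $f_{r}(M)=rM$, the second alternative is precisely $rM\subseteq N$, so we conclude $m\in N$ or $rM\subseteq N$, which is the defining condition of a weakly prime submodule; properness of $N$ is common to both notions and transfers directly. I do not expect a serious obstacle here—the argument is essentially a specialization of the endomorphism hypothesis to scalar maps—and the only point genuinely requiring care is confirming that $f_{r}$ lands in $S$, where the commutativity of $R$ is indispensable, since without it $f_{r}$ need not be $R$-linear.
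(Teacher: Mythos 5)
Your proof is correct and follows essentially the same route as the paper: both define the scalar-multiplication map $x\mapsto rx$, observe it lies in $End(M)$, and apply the $S$-weakly prime hypothesis to that endomorphism. Your only addition is spelling out the $R$-linearity check (where commutativity of $R$ enters), which the paper states as "Clearly."
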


\begin{proof}
Let $N$ be an $S$-$weakly$ $prime$ submodule of an $R$-module $M.$ Suppose
that for some $r\in R$ and $m\in M$ such that $0_{M}\neq rm\in N$ and $m\notin
N.$ We show that $r\in(N:_{R}M).$

Define $h:M\rightarrow M$ such that $h(x)=rx$ for all $x\in M.$ Clearly, $h\in
End(M).$ Since definition of $h$ and our assumption, then $0_{M}\neq
h(m)=rm\in N.$ Then as $m\notin N$ and $N$ is an $S$-$weakly$ $prime$
submodule, we get $h(M)\subseteq N.$ Thus $h(M)=rM\subseteq N,$ i.e.,
$r\in(N:_{R}M).$
\end{proof}

Note that\ generally a $weakly$ $prime$ submodule is not an $S$-$weakly$
$prime$ submodule. For this one can see the following example:

\begin{example}
\label{exa1}Let consider the submodule $N=2%
\mathbb{Z}
\oplus%
\mathbb{Z}
$ of $%
\mathbb{Z}
$-module $M=%
\mathbb{Z}
\oplus%
\mathbb{Z}
.$ Since $N$ is a maximal submodule, $N$ is a prime, so weakly prime
submodule. But $N$ is not an $S$-$weakly$ $prime$ submodule. Indeed, let
define $f:M\longrightarrow M$ such that $f((x,y))=(y,x)$ for all $(x,y)\in M.$
Then we get $f\in S=End(M).$ Thus we can easily see $0_{M}\neq
f((1,2))=(2,1)\in N,$ but $(1,2)\notin N$ and $f(M)=M\nsubseteq N.$
Consequently, $N$ is not $S$-$weakly$ $prime.$
\end{example}

\begin{proposition}
\label{pro4}Let $M$ be an $R$-module and $N$ be a proper submodule of $M.$
Then the followings are equivalent:

\begin{enumerate}
\item $N$ is an $S$-prime submodule.

\item $f(K)\subseteq N$ implies that either $K\subseteq N$ or $f(M)\subseteq
N,$ where $f\in S$ and $K$ is a proper submodule of $M.$
\end{enumerate}
\end{proposition}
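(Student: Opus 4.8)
The plan is to prove the two implications separately, the crucial observation being that the element-wise condition defining an $S$-prime submodule and the submodule-wise condition in (2) are linked through cyclic submodules: the element $m$ corresponds to $K=Rm$, and $R$-linearity of $f$ converts $f(m)\in N$ into $f(Rm)\subseteq N$.

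For the implication (1) $\Rightarrow$ (2), I would argue directly. Assume $N$ is $S$-prime and suppose $f(K)\subseteq N$ for some submodule $K$ and some $f\in S$, while $K\nsubseteq N$. Since $K\nsubseteq N$, I can pick an element $k\in K$ with $k\notin N$. Then $f(k)\in f(K)\subseteq N$, so applying the defining property of the $S$-prime submodule $N$ to the element $k$ gives either $k\in N$ or $f(M)\subseteq N$. As $k\notin N$, we must conclude $f(M)\subseteq N$, which is exactly the remaining alternative in (2). This direction is short and in fact does not even use the hypothesis that $K$ is proper.

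For the converse (2) $\Rightarrow$ (1), I would specialize the submodule condition to cyclic submodules. Suppose $f(m)\in N$ for some $m\in M$ and $f\in S$, and set $K=Rm$. Because $N$ is a submodule and $f$ is $R$-linear, $f(Rm)=Rf(m)\subseteq N$. If $Rm$ is a proper submodule of $M$, then condition (2) applies directly and yields $Rm\subseteq N$ (hence $m\in N$) or $f(M)\subseteq N$, which is precisely the defining property of an $S$-prime submodule at $m$.

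The main obstacle, and the single point requiring genuine care, is that (2) is stated only for proper submodules $K$, so the argument just given breaks down exactly when $Rm=M$. I would dispose of this case separately: if $Rm=M$, then $f(M)=f(Rm)=Rf(m)$, and since $f(m)\in N$ the submodule $Rf(m)$ is contained in $N$, so $f(M)\subseteq N$ holds outright, again matching one of the required alternatives. Combining the proper and non-proper cases establishes (1), and together with the first implication this proves the equivalence.
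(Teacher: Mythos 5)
Your proof is correct and follows essentially the same route as the paper's: pick $k\in K-N$ for $(1)\Rightarrow(2)$, and specialize condition (2) to the cyclic submodule $K=Rm$ for $(2)\Rightarrow(1)$. In fact you are slightly more careful than the paper itself, whose proof of $(2)\Rightarrow(1)$ applies the hypothesis to $K=Rm$ without checking that $Rm$ is proper; your separate treatment of the case $Rm=M$, where $f(M)=f(Rm)=Rf(m)\subseteq N$ holds outright, closes that small gap.
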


\begin{proof}
$(1)\Longrightarrow(2):$ Let $N$ be an $S$-prime submodule. Assume that
$f(K)\subseteq N$ and $K\nsubseteq N.$ Then there exists $k\in K-N.$ Thus
$f(k)\in f(K)\subseteq N.$ Since $N$ is $S$-prime, $f(M)\subseteq N.$

$(2)\Longrightarrow(1):$ Let $f(m)\in N.$ We show that either $m\in N$ or
$f(M)\subseteq N.$ Since $f(Rm)\subseteq N,$ by our hypothesis we obtain
either $Rm\subseteq N$ or $f(M)\subseteq N.$ Consequently, $m\in N$ or
$f(M)\subseteq N.$
\end{proof}

\begin{corollary}
Let $M$ be an $R$-module and $N$ be a proper submodule of $M.$ Then the
followings are equivalent:
\end{corollary}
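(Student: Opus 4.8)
The plan is to treat this corollary as the $S$-weakly prime analogue of Proposition \ref{pro4}, so I expect its two equivalent conditions to read: (1) $N$ is an $S$-weakly prime submodule, and (2) whenever $f\in S$ and $K$ is a proper submodule of $M$ satisfy $0_{M}\neq f(K)\subseteq N$, then either $K\subseteq N$ or $f(M)\subseteq N$. I would establish the equivalence by proving the two implications separately, reusing the passage between elements and cyclic submodules exactly as in Proposition \ref{pro4}, but inserting the extra bookkeeping that the hypothesis $f(K)\neq 0_{M}$ forces on us.

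For $(2)\Rightarrow(1)$ I would argue almost verbatim as in Proposition \ref{pro4}. Given $0_{M}\neq f(m)\in N$, set $K=Rm$ and observe that, since $f$ is an endomorphism, $f(Rm)=Rf(m)\subseteq N$; moreover $f(Rm)\ni f(m)\neq 0_{M}$, so $0_{M}\neq f(K)\subseteq N$. Applying (2) yields $Rm\subseteq N$ or $f(M)\subseteq N$, i.e. $m\in N$ or $f(M)\subseteq N$. The only point needing care is that (2) is stated for proper $K$; if $Rm=M$ then $f(M)=f(Rm)\subseteq N$ holds directly, so this boundary case causes no trouble.

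The hard part will be $(1)\Rightarrow(2)$, and the obstacle is exactly the feature separating weakly prime from prime: picking some $k\in K\setminus N$ (which exists because $K\nsubseteq N$) no longer lets me apply the definition, because $f(k)$ might equal $0_{M}$, and the $S$-weakly prime hypothesis only speaks about nonzero images. So the core of the argument is to manufacture an element of $K\setminus N$ whose image is nonzero. I would do this by a short case analysis: if the chosen $k$ already has $f(k)\neq 0_{M}$ we are done; otherwise, using $f(K)\neq 0_{M}$ pick $k_{1}\in K$ with $f(k_{1})\neq 0_{M}$, and if $k_{1}\notin N$ apply the definition to $k_{1}$, while if $k_{1}\in N$ apply it to the translate $k+k_{1}$, which lies in $K\setminus N$ (since $k\notin N$ and $k_{1}\in N$) and satisfies $f(k+k_{1})=f(k)+f(k_{1})=f(k_{1})\neq 0_{M}$. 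In every branch I obtain $0_{M}\neq f(k')\in N$ with $k'\notin N$, and the $S$-weakly prime property then gives $f(M)\subseteq N$, which establishes (2).
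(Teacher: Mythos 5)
You have reconstructed the wrong statement, and as a result your proof establishes a different result than the one this corollary asserts. The corollary in question is the one placed immediately after Proposition \ref{pro4}, and its two (truncated) items are: (1) $N$ is an $S$-\emph{prime} submodule; (2) $f(Rm)\subseteq N$ implies that either $Rm\subseteq N$ or $f(M)\subseteq N$, where $f\in S$ and $m\in M$. There is no hypothesis $0_{M}\neq f(K)$, no general proper submodule $K$, and no ``weakly'' anywhere: it is simply Proposition \ref{pro4} specialized to cyclic submodules $K=Rm$. Accordingly the paper's entire proof is ``By Proposition \ref{pro4}'': one direction is the specialization $K=Rm$ (an element $f(m)\in N$ is equivalent to $f(Rm)=Rf(m)\subseteq N$, and $m\in N$ is equivalent to $Rm\subseteq N$), and the other direction is the same translation back to elements. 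The statement you guessed --- $N$ is $S$-weakly prime iff $0_{M}\neq f(K)\subseteq N$ forces $K\subseteq N$ or $f(M)\subseteq N$ --- is Proposition \ref{pro5} of the paper, a separate and genuinely harder result, whose own cyclic-submodule corollary (with condition $0_{M}\neq f(Rm)\subseteq N$) appears later in the text.

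That said, the mathematics you wrote is sound for the statement you aimed at, and it is in substance the paper's own proof of Proposition \ref{pro5}: your translation trick --- replacing $k\in K\setminus N$ with $f(k)=0_{M}$ by $k+k_{1}$, where $k_{1}\in K\cap N$ has $f(k_{1})\neq 0_{M}$, so that $k+k_{1}\notin N$ and $f(k+k_{1})=f(k_{1})\neq 0_{M}$ --- is exactly the paper's Case 2, except that you run the argument directly rather than by contradiction and avoid the detour through Theorem \ref{the1}, which is arguably cleaner. Your $(2)\Rightarrow(1)$ direction, including the boundary case $Rm=M$, is also correct. But measured against the corollary actually posed, the proposal misses the point: none of this case analysis is needed, because for $S$-prime submodules the equivalence with the cyclic-submodule condition is an immediate rephrasing of Proposition \ref{pro4}.
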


\begin{enumerate}
\item $N$ is an $S$-prime submodule.

\item $f(Rm)\subseteq N$ implies that either $Rm\subseteq N$ or $f(M)\subseteq
N,$ where $f\in S$ and $m\in M.$
\end{enumerate}

\begin{proof}
By Proposition \ref{pro4}.
\end{proof}

\begin{proposition}
\label{pro babei}Let $M$ be an $R$-module and $N$ be a proper submodule of
$M.$ Then $N$ is an $S$-prime submodule if and only if \ $f^{-1}(N)=M$ or
$f^{-1}(N)\subseteq N,$ for all $f\in S.$
\end{proposition}

\begin{proof}
$\Longrightarrow:$ Let $N$ be an $S$-prime submodule. Assume that
$f(M)\subseteq N.$ Then it is clear that \ $f^{-1}(N)=M.$ So suppose that
$f(M)\nsubseteq N.$ Take $m\in f^{-1}(N).$ Then $f(m)\in N.$ Since $N$ is an
$S$-prime submodule and $f(M)\nsubseteq N,$ we have $m\in N.$ Consequently,
$f^{-1}(N)\subseteq N.$

$\Longleftarrow:$ Assume that $f^{-1}(N)\subseteq N$ or $f^{-1}(N)=M$ for all
$f\in End(M).$ Let $f(m)\in N.$ If $f^{-1}(N)\subseteq N,$ then $m\in
f^{-1}(N)\subseteq N,$ so we are done. On the other hand, if $f^{-1}(N)=M$,
then we get $f(M)\subseteq N.$ Thus $N$ is an $S$-prime submodule.
\end{proof}

\begin{corollary}
The zero submodule $\{0_{M}\}$ of $M$ is an $S$-prime submodule if and only
if\ $f$ is one-to-one, for all $0\neq f\in S.$
\end{corollary}

\begin{proof}
By Proposition \ref{pro babei}.
\end{proof}

\begin{theorem}
\label{the1}Let $M$ be an $R$-module and $N$ be a proper submodule of $M.$ For
all $f\in S,$ the followings are equivalent:

\begin{enumerate}
\item $N$ is an $S$-$weakly$ $prime$ submodule of $M.$

\item $(N:_{R}f(x))=(N:_{R}f(M))\cup(0_{M}:_{R}f(x))$ for all $x\notin N.$

\item $(N:_{R}f(x))=(N:_{R}f(M))$ or $(N:_{R}f(x))=(0_{M}:_{R}f(x))$ for all
$x\notin N.$
\end{enumerate}
\end{theorem}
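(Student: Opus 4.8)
The plan is to establish the equivalences through two genuinely different arguments: a purely ideal-theoretic passage between (2) and (3), and the module-theoretic content linking these to the weakly prime condition (1). Throughout, fix $f\in S$ and $x\notin N$, and note at the outset that all three sets in question are ideals of $R$ and that one always has the inclusions $(N:_{R}f(M))\subseteq(N:_{R}f(x))$ and $(0_{M}:_{R}f(x))\subseteq(N:_{R}f(x))$. The first holds because $f(x)\in f(M)$, so $rf(M)\subseteq N$ forces $rf(x)\in N$; the second holds because $0_{M}\in N$. These two standing inclusions are what make (2) and (3) meaningful, since they say precisely that $(N:_{R}f(x))$ contains both of the candidate ideals.

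For $(2)\Leftrightarrow(3)$ I would invoke the standard fact that an ideal which is the union of two subideals must equal one of them. Writing $A=(N:_{R}f(x))$, $B=(N:_{R}f(M))$, $C=(0_{M}:_{R}f(x))$ with $B,C\subseteq A$, the implication $(3)\Rightarrow(2)$ is immediate. For $(2)\Rightarrow(3)$, suppose $A=B\cup C$ but $A\neq B$ and $A\neq C$; choosing $b\in A\setminus C\subseteq B$ and $c\in A\setminus B\subseteq C$ and testing $b+c\in A=B\cup C$ yields a contradiction in either case, so $A=B$ or $A=C$.

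The main content is $(1)\Leftrightarrow(2)$. For $(1)\Rightarrow(2)$, given the inclusions above it suffices to prove $(N:_{R}f(x))\subseteq(N:_{R}f(M))\cup(0_{M}:_{R}f(x))$. Take $r\in(N:_{R}f(x))$, so $rf(x)\in N$, and consider the map $g\colon M\to M$ defined by $g(y)=rf(y)$; since $R$ is commutative, multiplication by $r$ is an $R$-endomorphism, hence $g=r\cdot f\in S$. If $g(x)=0_{M}$ then $r\in(0_{M}:_{R}f(x))$; otherwise $0_{M}\neq g(x)=rf(x)\in N$ with $x\notin N$, and the $S$-weakly prime hypothesis gives $g(M)=rf(M)\subseteq N$, i.e. $r\in(N:_{R}f(M))$. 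This is exactly the required inclusion.

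For $(2)\Rightarrow(1)$, suppose $0_{M}\neq f(m)\in N$ with $m\notin N$; applying (2) at $x=m$ gives $(N:_{R}f(m))=(N:_{R}f(M))\cup(0_{M}:_{R}f(m))$. Since $1\cdot f(m)=f(m)\in N$ we have $1\in(N:_{R}f(m))$, so $1$ must lie in one of the two sets; it cannot lie in $(0_{M}:_{R}f(m))$ because $f(m)\neq 0_{M}$, hence $1\in(N:_{R}f(M))$ and $f(M)\subseteq N$, as needed. The only delicate point is the verification that $g=r\cdot f$ is again an endomorphism of $M$, which is where commutativity of $R$ is used; everything else is bookkeeping with the two standing inclusions and the union-of-ideals lemma.
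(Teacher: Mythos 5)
Your proof is correct and follows essentially the same route as the paper: the same multiplication endomorphism $g(y)=rf(y)$ drives $(1)\Rightarrow(2)$, and the same test of whether $1_{R}$ lies in $(0_{M}:_{R}f(m))$ or in $(N:_{R}f(M))$ recovers $(1)$. The only differences are organizational---you prove $(1)\Leftrightarrow(2)$ and $(2)\Leftrightarrow(3)$ rather than the paper's cycle $(1)\Rightarrow(2)\Rightarrow(3)\Rightarrow(1)$, and you spell out the union-of-two-ideals lemma that the paper dismisses as ``Clear.''
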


\begin{proof}
$(1)\Longrightarrow(2):$ Assume that $N$ is $S$-$weakly$ $prime$. Let
$r\in(N:_{R}f(x))$ and $x\notin N.$ Then we get $rf(x)\in N.$ If
$rf(x)=0_{M},$ then $r\in(0_{M}:_{R}f(x)).$ Suppose that $rf(x)\neq0_{M}.$
Define $h:M\rightarrow M$ such that $h(m)=rf(m),$ for all $m\in M.$ Clearly
$h\in End(M)$, also $0_{M}\neq h(x)=rf(x)\in N.$ Since $N$ is an $S$-$weakly$
$prime$ submodule and $x\notin N,$ then we obtain $h(M)\subseteq N.$ Thus
$h(M)=rf(M)\subseteq N$ and so $r\in(N:_{R}f(M)).$

$(2)\Longrightarrow(3):$ Clear.

$(3)\Longrightarrow(1):$ Suppose that $h\in End(M)$ and $m\notin N$ such that
$0_{M}\neq h(m)\in N.$ We prove that $h(M)\subseteq N.$ Since $0_{M}\neq
h(m),$ we get $(N:_{R}h(m))\neq(0_{M}:_{R}h(m)).$ Indeed, if $(N:_{R}%
h(m))=(0_{M}:_{R}h(m)),$ then we obtain $(N:_{R}h(m))=R=(0_{M}:_{R}h(m)),$
i.e., $1_{R}h(m)=0_{M},$ a contradiction. Thus we have $(N:_{R}h(m))=(N:_{R}%
h(M)),$ by our hypothesis $(3)$. Since $(N:_{R}h(m))=R,$ we get $h(M)\subseteq
N.$
\end{proof}

\begin{proposition}
\label{pro5}Let $M$ be an $R$-module and $N$ be a proper submodule of $M$.
Then the followings are equivalent:

\begin{enumerate}
\item $N$ is an $S$-weakly prime submodule.

\item $0_{M}\neq f(K)\subseteq N$ implies that either $K\subseteq N$ or
$f(M)\subseteq N,$ where $f\in S$ and $K$ is a submodule of $M.$
\end{enumerate}
\end{proposition}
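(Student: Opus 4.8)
The plan is to follow the template of Proposition \ref{pro4}, adapting it to accommodate the nonzero hypothesis $0_{M}\neq f(K)$. The implication $(2)\Longrightarrow(1)$ should be straightforward: given $f\in S$ and $m\in M$ with $0_{M}\neq f(m)\in N$, I would take $K=Rm$. Since $f$ is $R$-linear, $f(Rm)=Rf(m)\subseteq N$ because $f(m)\in N$ and $N$ is a submodule; and since $f(m)\neq0_{M}$ lies in $f(Rm)$, we have $0_{M}\neq f(Rm)\subseteq N$. Hypothesis $(2)$ then yields $Rm\subseteq N$ or $f(M)\subseteq N$; in the first case $m\in Rm\subseteq N$, so $N$ is $S$-weakly prime either way.

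The substance is in $(1)\Longrightarrow(2)$. Suppose $N$ is $S$-weakly prime and $0_{M}\neq f(K)\subseteq N$ with $K\nsubseteq N$; I must produce $f(M)\subseteq N$. The naive move---pick $k\in K\setminus N$ and apply the definition to $f(k)\in N$---fails here, because the $S$-weakly prime definition applies only when the image is nonzero, and nothing guarantees $f(k)\neq0_{M}$ for this particular $k$. This is the main obstacle and the only place the argument genuinely differs from the $S$-prime case treated in Proposition \ref{pro4}.

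To get around it, I would locate an element $w\in K\setminus N$ with $f(w)\neq0_{M}$. Fix $k\in K\setminus N$ (available since $K\nsubseteq N$) and fix $k_{0}\in K$ with $f(k_{0})\neq0_{M}$ (available since $f(K)\neq\{0_{M}\}$). If $f(k)\neq0_{M}$, take $w=k$. If $f(k)=0_{M}$ and $k_{0}\notin N$, take $w=k_{0}$. If $f(k)=0_{M}$ and $k_{0}\in N$, take $w=k+k_{0}$: then $f(w)=f(k)+f(k_{0})=f(k_{0})\neq0_{M}$, while $w\notin N$ because $k\notin N$ and $k_{0}\in N$ would otherwise force $k\in N$. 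In every case $w\in K\setminus N$ and $0_{M}\neq f(w)\in f(K)\subseteq N$, so applying the $S$-weakly prime hypothesis to $w$ with $w\notin N$ forces $f(M)\subseteq N$, which completes the proof.
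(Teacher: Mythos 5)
Your proposal is correct and takes essentially the same approach as the paper's proof. The key obstacle and its resolution are identical---when the chosen element of $K\setminus N$ has zero image, both arguments add an element of $K$ with nonzero image (which necessarily lies in $N$) and apply the $S$-weakly prime definition to the sum; you organize this as a direct search for a witness $w\in K\setminus N$ with $f(w)\neq 0_{M}$, whereas the paper argues contrapositively (showing $f(K)=0_{M}$ under the negation) and routes one of its cases through Theorem \ref{the1}.
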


\begin{proof}
$(1)\Longrightarrow(2):$ Let $N$ be an $S$-weakly prime submodule. Suppose
that $f(K)\subseteq N$, $K\nsubseteq N$ and $f(M)\nsubseteq N.$ Then we show
$f(K)=0_{M}.$ For every $k\in K,$ we have 2 cases:

Case 1: Let $k\in K-N.$ By Theorem \ref{the1}, we can see that $(N:_{R}%
f(k))=(N:_{R}f(M))$ or $(N:_{R}f(k))=(0_{M}:_{R}f(k)).$ Since $f(k)\in
f(K)\subseteq N,$ one get $1_{R}\in(N:_{R}f(k)).$ Thus either $1_{R}\in
(N:_{R}f(M))$ or $1_{R}\in(0_{M}:_{R}f(k)).$ The first one contradicts our
assumption. Thus we obtain $f(k)=0_{M}.$

Case 2: Let $k\in K\cap N.$ If $f(k)=0_{M},$ we are done. Let suppose
$f(k)\neq0_{M}.$ Since $K\nsubseteq N$, there exists $0_{M}\neq y\in K-N$.
\ Then $f(y)\in f(K)\subseteq N,$ one get $1_{R}\in(N:_{R}f(y)).$ Thus either
$1_{R}\in(N:_{R}f(M))$ or $1_{R}\in(0_{M}:_{R}f(y)).$ The first one
contradicts our assumption. Thus we obtain $f(y)=0_{M}.$ Then one can see
$0_{M}\neq f(y+k)=f(y)+f(k)\in f(K)\subseteq N.$ Since $N$ is $S$-weakly
prime, we get $y+k\in N$ or $f(M)\subseteq N.$ So, $y\in N$ or $f(M)\subseteq
N,$ i.e., contradiction.

Consequently, for every $k\in K,$ we obtain $f(k)=0_{M}.$

$(2)\Longrightarrow(1):$ Let $0_{M}\neq f(m)\in N.$ We show that either $m\in
N$ or $f(M)\subseteq N.$ Since $0_{M}\neq f(Rm)\subseteq N,$ by our hypothesis
we obtain either $Rm\subseteq N$ or $f(M)\subseteq N.$ Consequently, $m\in N$
or $f(M)\subseteq N.$
\end{proof}

\begin{corollary}
Let $M$ be an $R$-module and $N$ be a proper submodule of $M$. Then the
followings are equivalent:

\begin{enumerate}
\item $N$ is an $S$-weakly prime submodule.

\item $0_{M}\neq f(Rm)\subseteq N$ implies that either $Rm\subseteq N$ or
$f(M)\subseteq N,$ where $f\in S$ and $m\in M.$
\end{enumerate}
\end{corollary}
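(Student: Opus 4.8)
The plan is to derive this corollary directly from Proposition \ref{pro5} by specializing the arbitrary submodule $K$ appearing there to a cyclic submodule $Rm$. No new machinery is needed; the whole content is that cyclic submodules suffice to test the $S$-weakly prime condition.

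For the direction $(1)\Longrightarrow(2)$, I would simply observe that $Rm$ is itself a submodule of $M$, so the hypothesis $0_{M}\neq f(Rm)\subseteq N$ is nothing but an instance of the hypothesis in part $(2)$ of Proposition \ref{pro5} with $K=Rm$. Applying that proposition at once yields $Rm\subseteq N$ or $f(M)\subseteq N$, which is exactly the desired conclusion.

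For the direction $(2)\Longrightarrow(1)$, I would start from $0_{M}\neq f(m)\in N$ and check that the cyclic submodule $Rm$ satisfies the hypothesis of $(2)$. The key point is that since $f\in S=End(M)$ is $R$-linear, we have $f(Rm)=Rf(m)$; because $N$ is a submodule containing $f(m)$, it follows that $Rf(m)\subseteq N$, hence $f(Rm)\subseteq N$. Moreover $0_{M}\neq f(m)\in f(Rm)$ forces $f(Rm)\neq 0_{M}$, so in fact $0_{M}\neq f(Rm)\subseteq N$. Then $(2)$ gives $Rm\subseteq N$ or $f(M)\subseteq N$, and since $m=1_{R}m\in Rm$ the first alternative delivers $m\in N$, completing the argument.

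The only mild subtlety, which is hardly an obstacle, is recording the identity $f(Rm)=Rf(m)$ for the endomorphism $f$; this is precisely what lets one translate between the cyclic-submodule formulation in $(2)$ and the element-wise formulation in the definition of an $S$-weakly prime submodule.
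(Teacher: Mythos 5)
Your proof is correct and follows essentially the same route as the paper, which simply cites Proposition \ref{pro5}: the forward direction is the specialization $K=Rm$, and the backward direction is the element-wise argument via $f(Rm)=Rf(m)$, exactly as in the proof of $(2)\Longrightarrow(1)$ of that proposition. Your write-up is in fact slightly more careful than the paper's one-line citation, since you note that the backward direction requires rerunning the cyclic-submodule argument rather than merely quoting the statement of Proposition \ref{pro5}.
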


\begin{proof}
By Proposition \ref{pro5}.
\end{proof}

\begin{theorem}
\label{theS}Let $M$ be an $R$-module and $N$ be a proper submodule of $M.$ For
all $f\in S,$ the followings are equivalent:

\begin{enumerate}
\item $N$ is an $S$-$prime$ submodule of $M.$

\item $(N:_{R}f(x))=(N:_{R}f(M))$ for all $x\notin N.$
\end{enumerate}
\end{theorem}

\begin{proof}
$(1)\Longrightarrow(2):$ Assume that $N$ is $S$-$prime$ and $x\notin N$. Let
$r\in(N:_{R}f(x)).$ Then we get $rf(x)\in N.$ Define $h:M\rightarrow M$ such
that $h(m)=rf(m)$ for all $m\in M.$ Clearly $h\in End(M)$, also $h(x)=rf(x)\in
N.$ Since $N$ is an $S$-$prime$ submodule and $x\notin N,$ then we obtain
$h(M)\subseteq N.$ Thus $h(M)=rf(M)\subseteq N$ and so $r\in(N:_{R}f(M)).$ The
other containment is always hold.

$(2)\Longrightarrow(1):$ Suppose that $h\in End(M)$ and $m\notin N$ such that
$h(m)\in N.$ We prove that $h(M)\subseteq N.$ Since $1_{R}h(m)\in N,$ we get
$1_{R}\in(N:_{R}h(m))=$ $(N:_{R}h(M)).$ Thus $h(M)\subseteq N$ by the assumption.
\end{proof}

To avoid losing the integrity, we give the following proposition.

\begin{proposition}
\label{pro2}(\cite{SF2007}, Proposition 2.1) Let $M$ be a faithful cyclic
$R$-module. If $N$ is a weakly prime submodule, then $(N:_{R}M)$ is a weakly
prime ideal of $R.$
\end{proposition}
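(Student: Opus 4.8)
The plan is to prove that for a faithful cyclic module $M$, if $N$ is a weakly prime submodule then $(N:_{R}M)$ is a weakly prime ideal. First I would use faithful cyclicity to set up coordinates: write $M=Rm_{0}$ for a generator $m_{0}$, and observe that the faithfulness condition $\mathrm{Ann}(M)=0_{R}$ forces the map $r\mapsto rm_{0}$ to be injective, so that $rm_{0}=0_{M}$ if and only if $r=0_{R}$. This dictionary between elements of $R$ and elements of $M$ is what makes the whole argument run, and I would establish it carefully at the outset.

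Next I would unwind the definition of weakly prime ideal. Suppose $a,b\in R$ satisfy $0_{R}\neq ab\in(N:_{R}M)$, and assume $a\notin(N:_{R}M)$; the goal is to conclude $b\in(N:_{R}M)$. From $ab\in(N:_{R}M)$ we get $abM\subseteq N$, and in particular $ab\,m_{0}\in N$. The key step is to convert the hypothesis $0_{R}\neq ab$ into the nonzero condition $0_{M}\neq ab\,m_{0}\in N$ needed to apply weak primeness of $N$ at the level of the module; this is exactly where faithfulness is used, since $ab\neq 0_{R}$ gives $ab\,m_{0}\neq 0_{M}$. Writing $x=bm_{0}$, we then have $0_{M}\neq a\,x=ab\,m_{0}\in N$, so by the definition of weakly prime submodule (applied with the scalar $a$ and the element $x=bm_{0}$) either $bm_{0}\in N$ or $aM\subseteq N$.

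The second alternative $aM\subseteq N$ would give $a\in(N:_{R}M)$, contradicting our standing assumption, so we must be in the first case $bm_{0}\in N$. The final step is to upgrade $bm_{0}\in N$ to $bM\subseteq N$, i.e. $b\in(N:_{R}M)$: since $M=Rm_{0}$, every element of $M$ has the form $rm_{0}$, and $b(rm_{0})=r(bm_{0})\in N$ because $N$ is a submodule and $bm_{0}\in N$. Hence $bM\subseteq N$, giving $b\in(N:_{R}M)$ as desired.

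I expect the main obstacle to be the careful handling of the nonvanishing condition rather than any deep structural difficulty: one must be sure that the hypothesis in the ideal-level statement, $0_{R}\neq ab$, translates precisely into the module-level nonvanishing $0_{M}\neq ab\,m_{0}$ that the definition of $S$-weakly (here, ordinary weakly) prime submodule requires, and it is exactly the cyclic and faithful hypotheses that guarantee this translation is both valid and reversible. The cyclic hypothesis is also essential in the last step, since without a single generator one could not promote membership at one point $bm_{0}\in N$ to containment $bM\subseteq N$.
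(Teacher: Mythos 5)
Your proof is correct, and there is an important contextual point: the paper itself gives no proof of Proposition \ref{pro2}. It is stated with the citation (\cite{SF2007}, Proposition 2.1) and included only ``to avoid losing the integrity'' of the exposition, so there is no internal argument to compare against; what you have written is a complete, self-contained proof of the quoted result, and it is the natural one. Every step checks out: cyclicity gives $Ann(M)=\{r\in R: rm_{0}=0_{M}\}$ (if $rm_{0}=0_{M}$ then $r(sm_{0})=s(rm_{0})=0_{M}$ for all $s$, so $rM=0_{M}$), hence faithfulness makes $r\mapsto rm_{0}$ injective; this converts the ideal-level hypothesis $0_{R}\neq ab$ into the module-level nonvanishing $0_{M}\neq ab\,m_{0}\in N$; weak primeness of $N$ applied with scalar $a$ and element $x=bm_{0}$ gives $bm_{0}\in N$ or $aM\subseteq N$; the latter contradicts $a\notin(N:_{R}M)$; and cyclicity again promotes $bm_{0}\in N$ to $bM=R(bm_{0})\subseteq N$. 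The only point you leave implicit is that $(N:_{R}M)$ is a \emph{proper} ideal, which the definition of weakly prime ideal requires; this is immediate (if $1_{R}\in(N:_{R}M)$ then $M\subseteq N$, contradicting properness of $N$), but it deserves one sentence.
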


However, Proposition \ref{pro2} is not true for "$S$-weakly prime situation".
So we mean if $N$ is an $S$-weakly prime submodule, then $(N:_{R}M)$ may not
be an $S$-weakly prime ideal of $R.$

Note that for a subset $A$ of $M$, we denote the submodule generated by $A$ in
$M$ as $<A>$. In particular, if $X=\{a\},$ then it is denoted by $<a>$. If $M$
is an $R$-module such that $M=<a>,$ then $M$ is called cyclic module. It is
clear that every cyclic module is a multiplication module, see \cite{PF1988}.

\begin{proposition}
\label{pro3}Let $M$ be a cyclic $R$-module such that $M=<a>$ and $N$ be a
proper submodule of $M.$ Then the followings are equivalent:

\begin{enumerate}
\item $N$ is a weakly prime submodule.

\item $N$ is an $S$-weakly prime submodule.
\end{enumerate}
\end{proposition}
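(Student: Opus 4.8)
The plan is to handle the two implications asymmetrically, since one direction is already free. For $(2)\Longrightarrow(1)$ I would simply invoke Proposition \ref{pro1}, which states that every $S$-weakly prime submodule is weakly prime; no further work is needed there.

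The content lies in $(1)\Longrightarrow(2)$, and the key structural fact I would isolate first is that on a cyclic module every endomorphism is scalar multiplication. Concretely, let $f\in S=End(M)$. Since $f(a)\in M=<a>=Ra$, there is some $s\in R$ with $f(a)=sa$. Then for an arbitrary $m\in M$, writing $m=ra$, I would compute $f(m)=f(ra)=rf(a)=r(sa)=(rs)a=(sr)a=s(ra)=sm$, using $R$-linearity of $f$ together with the commutativity of $R$. Thus $f$ coincides with multiplication by $s$ on all of $M$, and in particular $f(M)=sM$.

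With this identification in hand, the $S$-weakly prime condition reduces directly to the weakly prime condition. Suppose $0_{M}\neq f(m)\in N$ for some $f\in S$ and $m\in M$. Taking $s\in R$ as above, this reads $0_{M}\neq sm\in N$. Since $N$ is weakly prime by hypothesis, either $m\in N$ or $sM\subseteq N$. In the latter case $f(M)=sM\subseteq N$. Hence $m\in N$ or $f(M)\subseteq N$, which is exactly what is required for $N$ to be $S$-weakly prime.

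The main (and essentially only) obstacle is establishing the scalar-multiplication lemma, and this is where cyclicity is indispensable: without $M=<a>$ one cannot conclude that $f$ is determined by a single ring element, and indeed Example \ref{exa1} shows the equivalence fails for the non-cyclic module $\mathbb{Z}\oplus\mathbb{Z}$, where the coordinate-swap endomorphism is not scalar multiplication. Once the lemma is secured, the remainder is a routine translation between the $S$-weakly prime definition and the weakly prime definition.
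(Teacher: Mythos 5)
Your proposal is correct. Both implications go through: $(2)\Longrightarrow(1)$ is indeed just Proposition \ref{pro1} (exactly as in the paper), and your scalar-multiplication lemma is valid --- for $f\in End(M)$ and $M=<a>$, writing $f(a)=sa$ and $m=ra$ gives $f(m)=rf(a)=(rs)a=s(ra)=sm$ by $R$-linearity and commutativity of $R$, so $f$ is multiplication by $s$ and $f(M)=sM$. The route differs from the paper's in a mild but genuine way. The paper does not isolate this lemma; instead it fixes $m\notin N$ with $0_{M}\neq f(m)\in N$, writes $m=r_{2}a$, and applies the weakly prime hypothesis to the factorization $f(m)=r_{2}f(a)$, i.e., to the pair $(r_{2},f(a))$: this forces $f(a)\in N$ (the alternative $r_{2}M\subseteq N$ would give $m\in N$, a contradiction), whence $f(M)=Rf(a)\subseteq N$. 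You instead apply the hypothesis to the pair $(s,m)$, which yields the disjunction $m\in N$ or $f(M)=sM\subseteq N$ immediately, with no contradiction argument. What your version buys is conceptual transparency: once endomorphisms of a cyclic module are identified with scalar multiplications, the coincidence of the $S$-notion with the ordinary notion is automatic, and the same one-line reduction would also prove the prime/$S$-prime analogue (and explains why Example \ref{exa1} must involve a non-scalar endomorphism, as you note). What the paper's version buys is only that it avoids stating the lemma separately; the underlying use of cyclicity is the same.
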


\begin{proof}
$(1)\Longrightarrow(2):$ Assume that $N$ is a weakly prime submodule. Let
choose $m\in M$ and $f\in End(M)$ such that $0_{M}\neq f(m)\in N$ and $m\notin
N.$ We prove that $f(M)\subseteq N.$ Let $f(x)\in f(M).$ Since $M=<a>,$ there
exist $r_{1},r_{2}\in R\ $such that $x=r_{1}a$ and $m=r_{2}a.$ Then we get
$0_{M}\neq f(m)=f(r_{2}a)=r_{2}f(a)\in N.$ Since $N$ is weakly prime, then
$r_{2}\in(N:_{R}M)$ or $f(a)\in N.$ If $r_{2}\in(N:_{R}M),$ then we get
$m=r_{2}a\in N,$ i.e., a contradiction. Thus $f(a)\in N,$ so $f(x)=r_{1}%
f(a)\in N.$ As $x$ is an arbitrary element of $M$, we obtain $f(M)\subseteq
N.$

$(2)\Longrightarrow(1):$ By Proposition \ref{pro1}.
\end{proof}

\begin{corollary}
\label{COR1}Let $M$ be a faithful cyclic $R$-module. If $N$ is an $S$-weakly
prime submodule, then $(N:_{R}M)$ is an $S$-weakly prime ideal of $R.$
\end{corollary}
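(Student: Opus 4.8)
The plan is to reduce the statement to the results already proved for weakly prime objects, exploiting the fact that the ground ring $R$ is itself a faithful cyclic $R$-module. First I would record the dictionary between ideals and submodules: when $R$ is regarded as an $R$-module, its proper submodules are exactly its proper ideals, a submodule is weakly prime precisely when it is a weakly prime ideal, and an $S$-weakly prime submodule is precisely an $S$-weakly prime ideal of $R$ (this is immediate from the definitions, since every element of $End(R)$ acts as multiplication by an element of $R$). I would also note that $R=<1_R>$ is cyclic and, since $Ann(R)=0_R$, faithful, so Proposition \ref{pro3} is applicable with $R$ in the role of $M$.

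Next I would run the following chain. Since $N$ is an $S$-weakly prime submodule of $M$, Proposition \ref{pro1} gives that $N$ is a weakly prime submodule of $M$. Because $M$ is faithful and cyclic, Proposition \ref{pro2} then yields that $(N:_{R}M)$ is a weakly prime ideal of $R$. Moreover $(N:_{R}M)$ is a proper ideal, for if $(N:_{R}M)=R$ then $M=1_{R}M\subseteq N$, contradicting that $N$ is proper.

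Finally I would apply Proposition \ref{pro3} to the faithful cyclic $R$-module $R$ itself. Under the identification recorded above, the weakly prime ideal $(N:_{R}M)$ is a weakly prime submodule of the $R$-module $R$; hence by the equivalence in Proposition \ref{pro3} it is an $S$-weakly prime submodule of $R$, that is, an $S$-weakly prime ideal of $R$, as desired.

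I expect no genuine obstacle here: the argument is essentially bookkeeping, a double use of Proposition \ref{pro3} bridged by Proposition \ref{pro2}. The only point demanding care is the dictionary between the module-theoretic notions as applied to $M$ and the same notions as applied to the $R$-module $R$, together with the verification that $R$ meets the faithful cyclic hypothesis required to invoke Proposition \ref{pro3} a second time.
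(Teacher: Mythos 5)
Your proof is correct and follows essentially the same route as the paper: Proposition \ref{pro1} (or equivalently Proposition \ref{pro3} applied to $M$) to pass from $S$-weakly prime to weakly prime, then Proposition \ref{pro2} to transfer to $(N:_{R}M)$, then Proposition \ref{pro3} applied to $R$ viewed as a cyclic $R$-module to upgrade the weakly prime ideal to an $S$-weakly prime ideal. The only difference is that you supply the bookkeeping the paper leaves implicit (properness of $(N:_{R}M)$ and the ideal--submodule dictionary for $R$); note also that Proposition \ref{pro3} requires only cyclicity, so your verification that $R$ is faithful, while true, is not needed.
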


\begin{proof}
Assume that $N$ is an $S$-weakly prime submodule. Then $N$ is an weakly prime
submodule. By Proposition \ref{pro2}, $(N:_{R}M)$ is a weakly prime ideal of
$R.$ Since $R$ is a cyclic $R$-module, $(N:_{R}M)$ is an $S$-weakly prime
ideal of $R$ by Proposition \ref{pro3}.
\end{proof}

\bigskip

For the integrity of our study, we give the following Lemma:

\begin{lemma}
\label{lemma fg}(\cite{PF1988},Corollary in page 231) Let $I$, $J$ be two
ideals of $R$ and $M$ be a finitely generated multiplication $R$-module. Then
$IM\subseteq JM$ if and only if $I\subseteq J+Ann(M).$
\end{lemma}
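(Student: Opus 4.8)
The backward implication is routine: assuming $I\subseteq J+Ann(M)$ and multiplying by $M$ gives $IM\subseteq (J+Ann(M))M=JM+Ann(M)M=JM$, since $Ann(M)M=0_M$ by definition of the annihilator. The substance of the lemma is therefore the forward implication $IM\subseteq JM\Rightarrow I\subseteq J+Ann(M)$, and the plan is to establish it by a local--global argument.

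A first attempt via the determinant (adjugate) trick already indicates why localization is the right tool. Writing $M=Rm_1+\cdots+Rm_n$ and fixing $a\in I$, the inclusion $aM\subseteq IM\subseteq JM$ lets one write $am_i=\sum_j d_{ij}m_j$ with $d_{ij}\in J$. The matrix $A=aI_n-(d_{ij})$ then annihilates the tuple $(m_1,\dots,m_n)^{T}$, so multiplication by its adjugate yields $\det(A)\,m_i=0_M$ for every $i$, whence $\det(A)\in Ann(M)$; as every term of $\det(A)$ other than $a^n$ carries a factor from $J$, we get $\det(A)\equiv a^{n}\pmod{J}$ and hence $a^{n}\in J+Ann(M)$. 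This only gives $I\subseteq\sqrt{J+Ann(M)}$, so the multiplication hypothesis must be exploited more forcefully.

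Accordingly, I would fix a maximal ideal $\mathfrak{m}$ and localize, obtaining $I_{\mathfrak{m}}M_{\mathfrak{m}}\subseteq J_{\mathfrak{m}}M_{\mathfrak{m}}$. The key structural input is that $M_{\mathfrak{m}}$ is cyclic over $R_{\mathfrak{m}}$: by the standard characterization of multiplication modules, either $M=\mathfrak{m}M$, in which case Nakayama's lemma forces $M_{\mathfrak{m}}=0$, or there exist $m\in M$ and $t\in\mathfrak{m}$ with $(1-t)M\subseteq Rm$, in which case $1-t$ is a unit in $R_{\mathfrak{m}}$ and so $M_{\mathfrak{m}}=R_{\mathfrak{m}}(m/1)$. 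Writing $M_{\mathfrak{m}}=R_{\mathfrak{m}}\bar m$, the inclusion $I_{\mathfrak{m}}\bar m\subseteq J_{\mathfrak{m}}\bar m$ supplies, for each $\alpha\in I_{\mathfrak{m}}$, some $\beta\in J_{\mathfrak{m}}$ with $(\alpha-\beta)\bar m=0_M$, i.e.\ $\alpha-\beta\in(0_M:_{R_{\mathfrak{m}}}\bar m)=Ann(M_{\mathfrak{m}})$, so $I_{\mathfrak{m}}\subseteq J_{\mathfrak{m}}+Ann(M_{\mathfrak{m}})$.

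Finally I would globalize. Since $M$ is finitely generated, the annihilator commutes with localization, giving $J_{\mathfrak{m}}+Ann(M_{\mathfrak{m}})=(J+Ann(M))_{\mathfrak{m}}$; thus $I_{\mathfrak{m}}\subseteq(J+Ann(M))_{\mathfrak{m}}$ for every maximal ideal $\mathfrak{m}$, and the local--global principle for containment of submodules yields $I\subseteq J+Ann(M)$. I expect the cyclicity of $M_{\mathfrak{m}}$ to be the main obstacle, since it is precisely the point where both the multiplication property and finite generation are consumed, whereas the remaining steps are routine facts about localization.
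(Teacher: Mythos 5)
Your proof is correct, but there is nothing in the paper to compare it against: the paper does not prove Lemma \ref{lemma fg} at all, it simply quotes it from \cite{PF1988} (the Corollary on page 231) and uses it as a black box. Your argument is therefore a genuine, self-contained alternative to an appeal to the literature. The backward direction is exactly as you say. For the forward direction, your local--global scheme is sound and every ingredient is standard: (i) the dichotomy that for each maximal ideal $\mathfrak{m}$ either $M=\mathfrak{m}M$ or $(1-t)M\subseteq Rm$ for some $t\in\mathfrak{m}$, $m\in M$, is (a weakening of) the characterization of multiplication modules in \cite{ZP1988}, and in fact admits a two-line direct proof: pick $x\in M\setminus\mathfrak{m}M$, write $Rx=AM$ for an ideal $A$; then $A\nsubseteq\mathfrak{m}$, so $1-t\in A$ for some $t\in\mathfrak{m}$, whence $(1-t)M\subseteq AM=Rx$; (ii) Nakayama gives $M_{\mathfrak{m}}=0$ in the first case, and then the local containment is vacuous since $Ann(M_{\mathfrak{m}})=R_{\mathfrak{m}}$; (iii) $Ann(M)_{\mathfrak{m}}=Ann(M_{\mathfrak{m}})$ holds because $M$ is finitely generated; (iv) the local--global principle for containment of ideals is routine. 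You also place the hypotheses correctly: the multiplication property enters only through (i), finite generation only through (ii) and (iii); and your opening remark that the determinant/adjugate trick alone yields merely $I\subseteq\sqrt{J+Ann(M)}$ is accurate. As for what each route buys: the paper's citation keeps its exposition short and rests on Smith's ideal-theoretic treatment, whereas your localization argument makes the lemma self-contained and exposes the structural reason it holds, namely that finitely generated multiplication modules are locally cyclic. (One cosmetic slip: $(\alpha-\beta)\bar m=0_M$ should read $(\alpha-\beta)\bar m=0_{M_{\mathfrak{m}}}$.)
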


\begin{theorem}
\label{the mult1}Let $M$ be a finitely generated multiplication $R$-module and
$N$ be a proper submodule of $M.$ Then the followings are equivalent:

\begin{enumerate}
\item $N$ is an $S$-prime submodule.

\item $(N:_{R}M)$ is an $S$-prime ideal of $R.$

\item $N=IM,$ for some $S$-prime ideal $I$ of $R$ with $Ann(M)\subseteq I.$
\end{enumerate}
\end{theorem}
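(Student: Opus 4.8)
The plan is to prove the equivalence in a cycle, establishing $(1)\Rightarrow(2)\Rightarrow(3)\Rightarrow(1)$, relying on the characterization of $S$-prime submodules via residues (Theorem \ref{theS}), the multiplication-module identity $N=(N:_{R}M)M$, and Lemma \ref{lemma fg} to translate submodule inclusions into ideal inclusions.

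For $(1)\Rightarrow(2)$: Suppose $N$ is $S$-prime. To show $(N:_{R}M)$ is an $S$-prime ideal, I take $g\in End(R)$ and $a\in R$ with $g(a)\in(N:_{R}M)$, aiming for $a\in(N:_{R}M)$ or $g(R)\subseteq(N:_{R}M)$. The key mechanism is to lift an endomorphism of $R$ to one of $M$. Since $M$ is cyclic-like in behavior here (finitely generated multiplication), multiplication by a ring element induces an endomorphism of $M$; more precisely, one associates to $g$ the map $f\colon M\to M$ built from $g(1_{R})$ acting by scalar multiplication, using $g(a)=g(1_R)a$. Then $g(a)\in(N:_{R}M)$ gives $g(a)M\subseteq N$, i.e.\ $f(aM)\subseteq N$, and $S$-primeness of $N$ forces $aM\subseteq N$ (so $a\in(N:_{R}M)$) or $f(M)=g(1_R)M\subseteq N$ (so $g(1_R)\in(N:_{R}M)$, whence $g(R)\subseteq(N:_{R}M)$).

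For $(2)\Rightarrow(3)$: This is essentially immediate from the multiplication hypothesis. Setting $I=(N:_{R}M)$, the remark in the introduction gives $N=(N:_{R}M)M=IM$, and $Ann(M)=(0_M:_RM)\subseteq(N:_RM)=I$ since $N$ is proper and contains $0_M$. By $(2)$, $I$ is $S$-prime, so $(3)$ holds. For $(3)\Rightarrow(1)$: assume $N=IM$ with $I$ an $S$-prime ideal and $Ann(M)\subseteq I$. To verify $N$ is $S$-prime, take $f\in End(M)$ and $m\in M$ with $f(m)\in N$, and via Theorem \ref{theS} I would instead show $(N:_Rf(x))=(N:_Rf(M))$ for all $x\notin N$. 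Here Lemma \ref{lemma fg} is the crucial tool: an inclusion $rf(M)\subseteq N=IM$ is equivalent to a containment of the form $\langle rf(1)\rangle\subseteq I+Ann(M)=I$ at the ideal level, which lets me transfer the $S$-prime property of $I$ back to $N$.

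The main obstacle will be making rigorous the correspondence between $\operatorname{End}(M)$ and $\operatorname{End}(R)$ (or scalar multiplications) that the argument silently uses. For a finitely generated multiplication module every submodule has the form $IM$, but an arbitrary $f\in\operatorname{End}(M)$ need not be scalar multiplication, so the reduction in $(1)\Rightarrow(2)$ and especially in $(3)\Rightarrow(1)$ must carefully exploit that $f(M)=(f(M):_RM)M$ and compare the ideals $(N:_Rf(M))$ with the $S$-prime ideal $I$ through Lemma \ref{lemma fg}. Getting the $Ann(M)$ bookkeeping right—so that the ``$+Ann(M)$'' in the lemma collapses because $Ann(M)\subseteq I$—is where the finitely generated and faithful-type hypotheses do their real work, and I expect that to be the delicate point to write out cleanly.
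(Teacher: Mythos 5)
Your cycle $(1)\Rightarrow(2)\Rightarrow(3)\Rightarrow(1)$ is a reasonable structure, and two legs of it are sound. Indeed, your $(1)\Rightarrow(2)$ is more self-contained than the paper's, which simply cites Corollary 2.1.5 of \cite{SOD2010}: you observe that every $g\in End(R)$ is multiplication by $g(1_{R})$, lift this scalar action to $f\in End(M)$, and apply Proposition \ref{pro4} to $K=aM$ to get $a\in(N:_{R}M)$ or $g(1_{R})M\subseteq N$, hence $g(R)\subseteq(N:_{R}M)$; this is correct and needs neither the multiplication nor the finitely generated hypothesis. Your $(2)\Rightarrow(3)$ with $I=(N:_{R}M)$ matches the paper.

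The genuine gap is in $(3)\Rightarrow(1)$, which is where the real content of the theorem sits. Your key sentence --- that $rf(M)\subseteq N=IM$ ``is equivalent to $\langle rf(1)\rangle\subseteq I+Ann(M)=I$'' --- is not meaningful: $M$ is a module, not a ring, so $f(1)$ does not exist, and an arbitrary $f\in End(M)$ is not encoded by a single element. Moreover, you never say how the $S$-primeness of $I$ is actually used; ``transfer the $S$-prime property of $I$ back to $N$'' just restates the goal. The step that makes the implication work (and is the paper's proof) is: given $f(K)\subseteq N$, write $K=J_{1}M$ and $f(M)=J_{2}M$ (possible since $M$ is multiplication), compute $f(K)=J_{1}f(M)=J_{1}J_{2}M\subseteq IM$, apply Lemma \ref{lemma fg} to this \emph{hypothesis} (not to the desired conclusion) to get $J_{1}J_{2}\subseteq I+Ann(M)=I$, and then invoke that an $S$-prime ideal is in particular a prime ideal, so the product containment $J_{1}J_{2}\subseteq I$ forces $J_{1}\subseteq I$ or $J_{2}\subseteq I$, i.e. $K\subseteq N$ or $f(M)\subseteq N$ via Proposition \ref{pro4}. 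Your alternative route through Theorem \ref{theS} can be made to work, but only by the same device: from $rf(x)\in N$ with $x\notin N$, write $Rx=J_{1}M$ and $f(M)=J_{2}M$, deduce $rJ_{1}J_{2}\subseteq I$, and use primality of $I$ to rule out $J_{1}\subseteq I$ (it would give $x\in N$) and conclude $r\in I$ or $J_{2}\subseteq I$, either of which yields $rf(M)\subseteq N$. Until this factorization-plus-primality step is written out, the implication $(3)\Rightarrow(1)$, and hence the theorem, remains unproven.
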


\begin{proof}
$(1)\Longrightarrow(2):$ By Corollary 2.1.5 in \cite{SOD2010}.

$(2)\Longrightarrow(3):$ Since $M$ is a multiplication module, $N=(N:_{R}M)M,$
so we are done.

$(3)\Longrightarrow(1):$ Suppose that $N=IM,$ for some $S$-prime ideal $I$ of
$R$ with $Ann(M)\subseteq I.$ To use Proposition \ref{pro4}, assume that $K$
is a submodule of $M$ such that $f(K)\subseteq N,$ for any $f\in S.$ Since $M$
is a multiplication module, there exist two ideals $J_{1},J_{2}$ of $R$ such
that $K=J_{1}M$ and $f(M)=J_{2}M.$ Then $f(J_{1}M)=J_{1}f(M)=J_{1}%
J_{2}M\subseteq N=IM.$ By Lemma \ref{lemma fg}, $J_{1}J_{2}\subseteq
I+Ann(M)=I.$ As $I$ is an $S$-prime ideal, so prime, we get $J_{1}\subseteq I$
or $J_{2}\subseteq I.$ It implies $J_{1}M\subseteq N$ or $J_{2}M\subseteq N.$
Consequently, $K\subseteq N$ or $f(M)\subseteq N.$
\end{proof}

\begin{theorem}
\label{the mult2}Let $M$ be a cyclic faithful $R$-module and $N$ be a proper
submodule of $M$. Then the followings are equivalent:

\begin{enumerate}
\item $N$ is an $S$-weakly prime submodule.

\item $(N:_{R}M)$ is an $S$-weakly prime ideal of $R.$

\item $N=IM,$ for some $S$-weakly prime ideal $I$ of $R.$
\end{enumerate}
\end{theorem}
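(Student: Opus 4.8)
The statement is the weakly-prime analogue of Theorem \ref{the mult1}, for a cyclic faithful module. The key simplification here is that $M$ is cyclic and faithful, so $\mathrm{Ann}(M)=0_R$ and every cyclic module is multiplication; moreover Proposition \ref{pro3} applies, giving us that "$S$-weakly prime" and "weakly prime" coincide for submodules of $M$. The plan is to close the cycle $(1)\Rightarrow(2)\Rightarrow(3)\Rightarrow(1)$, mirroring the structure of the proof of Theorem \ref{the mult1} but inserting the weakly-prime twist wherever a product could vanish.

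For $(1)\Rightarrow(2)$, I would invoke Corollary \ref{COR1} directly: a cyclic faithful module is in particular faithful cyclic, so if $N$ is $S$-weakly prime then $(N:_R M)$ is an $S$-weakly prime ideal of $R$. For $(2)\Rightarrow(3)$, since $M$ is cyclic it is multiplication, so $N=(N:_R M)M$; setting $I=(N:_R M)$, which is $S$-weakly prime by hypothesis, gives the desired presentation $N=IM$ (and $\mathrm{Ann}(M)=0_R$ means we need not carry any annihilator term).

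The substantive direction is $(3)\Rightarrow(1)$, and this is where I expect the main obstacle. Suppose $N=IM$ for an $S$-weakly prime ideal $I$. I would use Proposition \ref{pro5}: take a submodule $K$ with $0_M\neq f(K)\subseteq N$ and show $K\subseteq N$ or $f(M)\subseteq N$. Writing $K=J_1 M$ and $f(M)=J_2 M$ for ideals $J_1,J_2$ (possible since $M$ is multiplication), we get $J_1 J_2 M=f(K)\subseteq N=IM$, and by Lemma \ref{lemma fg} with $\mathrm{Ann}(M)=0_R$ this yields $J_1 J_2\subseteq I$. The delicate point, absent in the prime case, is the hypothesis $0_M\neq f(K)$: to apply the $S$-weakly prime (equivalently, by Proposition \ref{pro3}, weakly prime) property of $I$ one needs $J_1 J_2\neq 0_R$, or more precisely one must rule out the degenerate situation where $I$ behaves weakly-primely only because the relevant product lands in the zero ideal. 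Using faithfulness, $0_M\neq f(K)=J_1 J_2 M$ forces $J_1 J_2\neq 0_R$ (since $J_1J_2 M=0_M$ would give $J_1 J_2\subseteq\mathrm{Ann}(M)=0_R$), so $0_R\neq J_1 J_2\subseteq I$; because $I$ is $S$-weakly prime, hence weakly prime, we conclude $J_1\subseteq I$ or $J_2\subseteq I$, and then $J_1 M\subseteq IM=N$ or $J_2 M\subseteq N$, i.e. $K\subseteq N$ or $f(M)\subseteq N$, as required. The chief care throughout is tracking the nonvanishing condition so that the weakly-prime hypothesis on $I$ is legitimately triggered rather than satisfied vacuously.
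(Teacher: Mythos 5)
Your skeleton coincides with the paper's: $(1)\Rightarrow(2)$ is Corollary \ref{COR1}, $(2)\Rightarrow(3)$ is the multiplication-module identity $N=(N:_{R}M)M$, and for $(3)\Rightarrow(1)$ the paper itself only says to combine Proposition \ref{pro5} and Lemma \ref{lemma fg} ``as the previous proof.'' Your elaboration of that sketch is the intended one, and your use of faithfulness to convert $0_{M}\neq f(K)=J_{1}J_{2}M$ into $0_{R}\neq J_{1}J_{2}$ is exactly the extra ingredient needed beyond Theorem \ref{the mult1}.

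There is, however, one genuine gap in your last step. From $0_{R}\neq J_{1}J_{2}\subseteq I$ you conclude ``since $I$ is weakly prime, $J_{1}\subseteq I$ or $J_{2}\subseteq I$.'' This is not an instance of the definition, which concerns elements: $0_{R}\neq ab\in I$ implies $a\in I$ or $b\in I$. In the prime case the passage from elements to ideals is routine: fix $a\in J_{1}\setminus I$; then $ab\in I$ forces $b\in I$ for every $b\in J_{2}$. For weakly prime ideals that argument collapses, because a particular product $ab$ with $a\in J_{1}\setminus I$ and $b\in J_{2}\setminus I$ can equal $0_{R}$ even though $J_{1}J_{2}\neq 0_{R}$, and then the hypothesis on $I$ says nothing about it. The ideal-to-ideal statement you invoke is true --- it is essentially the Anderson--Smith result on weakly prime ideals \cite{DE2003}, provable by a case analysis in which vanishing products $ab$ are compared with $(a+a')(b+b')$ for suitable $a'\in J_{1}\setminus I$, $b'\in J_{2}\setminus I$ --- but it must be proved or cited; as written, your argument (and, to be fair, the paper's one-line sketch) silently assumes it. Alternatively, the cyclic hypothesis lets you bypass ideals entirely: write $M=Ra$, so that $N=IM=Ia$; if $0_{M}\neq rm\in N$ with $m=sa\notin N$, then $rsa\in Ia$ together with faithfulness gives $rs\in I$, while $rm\neq 0_{M}$ gives $rs\neq 0_{R}$; the element-wise weakly prime property of $I$ then yields $r\in I$ or $s\in I$, and $s\in I$ would put $m\in Ia=N$, so $r\in I$ and $rM\subseteq Ia=N$. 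This shows $N$ is weakly prime, and Proposition \ref{pro3} upgrades weakly prime to $S$-weakly prime.
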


\begin{proof}
$(1)\Longrightarrow(2):$ By Corollary \ref{COR1}.

$(2)\Longrightarrow(3):$ Since $M$ is a multiplication module, $N=(N:_{R}M)M,$
so we are done.

$(3)\Longrightarrow(1):$ \ By the help of Proposition \ref{pro5} and Lemma
\ref{lemma fg}, as the previous proof one can prove easily.
\end{proof}

\begin{definition}
(\cite{SOD2010}, Definition 2.1.1)A proper submodule $N$ of an $R$-module $M$
is said to be fully invariant submodule of $M$ if $f(N)\subseteq N,$ for every
$f\in S$.
\end{definition}

\begin{theorem}
\label{the3}Let $M$ be an $R$-module and $N$ be a fully invariant and
$S$-weakly prime submodule of $M$ that is not $S$-prime. If $I$ is an ideal of
$R$ such that $I\subseteq(N:_{R}M),$ then $If(N)=0_{M},$ for any $f\in S.$ In
particular, $(N:_{R}M)f(N)=0_{M}.$
\end{theorem}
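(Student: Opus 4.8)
The plan is to imitate the classical fact that a weakly prime ideal which is not prime squares to zero, transported to the endomorphism setting. First I would unpack the hypotheses. Failure of the $S$-prime condition produces $g\in S$ and $m_{0}\in M$ with $g(m_{0})\in N$, $m_{0}\notin N$ and $g(M)\nsubseteq N$; since $N$ is $S$-weakly prime while $m_{0}\notin N$ and $g(M)\nsubseteq N$, the only way to avoid the weakly prime conclusion is $g(m_{0})=0_{M}$. This distinguished pair $(g,m_{0})$ plays the role of the two factors $a,b$ with $ab=0$ in the ideal argument.

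Next I would establish two vanishing lemmas. The first asserts $g(N)=0_{M}$: if $g(n)\neq0_{M}$ for some $n\in N$, then since $N$ is fully invariant $g(n)\in N$, so $0_{M}\neq g(m_{0}+n)=g(n)\in N$, and $S$-weak primeness forces $m_{0}+n\in N$ or $g(M)\subseteq N$, each contradicting the choice of $(g,m_{0})$. The second asserts $rf(m_{0})=0_{M}$ for every $r\in(N:_{R}M)$ and $f\in S$. Here I use that the map $x\mapsto rf(x)$ lies in $S$ (this uses commutativity of $R$) and that $rf(M)\subseteq rM\subseteq N$. If $rf(m_{0})\neq0_{M}$, apply $S$-weak primeness to $h=g+rf\in S$ at $m_{0}$: since $0_{M}\neq h(m_{0})=rf(m_{0})\in N$ and $m_{0}\notin N$, we would obtain $h(M)\subseteq N$, whence for every $x\in M$ we get $g(x)=h(x)-rf(x)\in N$, so $g(M)\subseteq N$, a contradiction.

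The main step then combines the two lemmas exactly as in the $P^{2}=0$ argument. Fixing $r\in(N:_{R}M)$, $f\in S$ and $n\in N$, I would evaluate $h=g+rf$ at $m_{0}+n$: expanding gives $h(m_{0}+n)=g(m_{0})+g(n)+rf(m_{0})+rf(n)$, and the first three summands vanish by the choice of $(g,m_{0})$ and by the two lemmas, leaving $h(m_{0}+n)=rf(n)$. If $rf(n)\neq0_{M}$, then $0_{M}\neq h(m_{0}+n)\in N$ (as $rf(n)\in rM\subseteq N$), so $S$-weak primeness yields $m_{0}+n\in N$, hence $m_{0}\in N$, or $h(M)\subseteq N$, hence $g(M)\subseteq N$; both are impossible. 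Therefore $rf(n)=0_{M}$ for all such $r,f,n$, that is $(N:_{R}M)f(N)=0_{M}$. Finally, since $I\subseteq(N:_{R}M)$ gives $If(N)\subseteq(N:_{R}M)f(N)$, we conclude $If(N)=0_{M}$ as well.

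The routine parts are the linearity checks that $rf$ and $g+rf$ belong to $S$, which use that $R$ is commutative. The main obstacle is the careful bookkeeping of the two perturbation arguments: each time $S$-weak primeness is invoked I must rule out both alternatives, which requires consistently using full invariance to keep $g(n)$ and $f(n)$ inside $N$, and using $r\in(N:_{R}M)$ to keep $rf(M)$ inside $N$, so that the manufactured image $h(m_{0}+n)$ genuinely lands in $N$ while staying nonzero.
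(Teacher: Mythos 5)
Your proof is correct, but it follows a genuinely different route from the paper's. The paper argues by contraposition: it supposes $If(N)\neq 0_{M}$ and shows $N$ satisfies the $S$-prime condition, the only nontrivial case being $f(m)=0_{M}$, where it picks $n\in N$ with $f(n)\neq 0_{M}$ and applies $S$-weak primeness to $0_{M}\neq f(n+m)=f(n)\in N$ (using full invariance); the remaining case $f(N)=0_{M}$ is dismissed because it would force $If(N)=0_{M}$. Your argument instead runs forward, in the style of the Anderson--Smith proof that a weakly prime ideal which is not prime has square zero: you fix a witness $(g,m_{0})$ of the failure of $S$-primeness, prove $g(N)=0_{M}$ (this step coincides with the paper's key computation), and then use the perturbed endomorphisms $h=g+rf$ to kill $rf(m_{0})$ and $rf(n)$, reaching the stronger conclusion $(N:_{R}M)f(N)=0_{M}$ directly and deducing $If(N)=0_{M}$ from it. What your extra bookkeeping buys is precision on a point the paper leaves murky: in the paper's proof the letter $f$ does double duty as the fixed endomorphism with $If(N)\neq 0_{M}$ and as the arbitrary endomorphism in the $S$-prime test, so read literally it only verifies the $S$-prime condition for that single $f$, which by itself does not contradict the hypothesis that $N$ fails to be $S$-prime, since the failure may be witnessed by a different endomorphism. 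Your sums $g+rf$ are exactly the device needed to connect the given witness $g$ to an arbitrary $f\in S$ and an arbitrary $r\in(N:_{R}M)$, so your proof is not merely a stylistic variant: it closes that quantifier gap, at the cost of being longer than the paper's argument.
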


\begin{proof}
Suppose that $If(N)\neq0_{M}.$ We show that $N$ is an $S$-prime submodule. Let
$f(m)\in N,$ where $f\in End(M)$ and $m\in M.$ If \ $f(m)\neq0_{M},$ since $N$
is $S$-weakly prime, we are done. So, assume that $f(m)=0_{M}.$ Then we have 2
cases for $f(N).$

Case 1: $f(N)\neq0_{M}.$ Then there exists $n\in N$ such that $f(n)\neq0_{M}.$
Thus $0_{M}\neq f(n+m).$ As $N$ is fully invariant, one see $0_{M}\neq
f(n+m)\in N.$ Since $N$ is $S$-weakly prime, $m+n\in N,$ i.e., $m\in N$ or
$f(M)\subseteq N.$

Case 2: $f(N)=0_{M}.$ As $If(N)\neq0_{M},$ contradiction.
\end{proof}

\begin{corollary}
\label{cor n2}Let $M$ be an $R$-module and $N$ be a fully invariant and
$S$-weakly prime submodule of $M$ that is not $S$-prime. If $M$ is
multiplication, then $f(N)^{2}=0_{M},$ for any $f\in S.$
\end{corollary}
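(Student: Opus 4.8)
The plan is to reduce the statement entirely to Theorem \ref{the3}, which already provides $If(N)=0_{M}$ for every ideal $I\subseteq(N:_{R}M)$ and every $f\in S$. The only genuine work is to rewrite the symbolic square $f(N)^{2}$ as an ideal acting on the submodule $f(N)$, so that Theorem \ref{the3} applies directly with a suitable choice of $I$.

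First I would note that $f(N)$ is a submodule of $M$, being the image of a submodule under an endomorphism, and that the multiplication hypothesis lets me write $f(N)=JM$ for some ideal $J$ of $R$. Next, since $N$ is fully invariant we have $f(N)\subseteq N$, hence $JM\subseteq N$; this forces $J\subseteq(N:_{R}M)$, because each $j\in J$ satisfies $jM\subseteq JM\subseteq N$, i.e. $j\in(N:_{R}M)$.

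Then I would compute the product. By the definition of the product of submodules in a multiplication module we have $f(N)^{2}=(JM)(JM)=J^{2}M=J(JM)=Jf(N)$. Since $J\subseteq(N:_{R}M)$, Theorem \ref{the3} applied with the ideal $I=J$ gives $Jf(N)=0_{M}$, and therefore $f(N)^{2}=0_{M}$. The conclusion holds uniformly for every $f\in S$ because the argument only invokes that $N$ is fully invariant together with the hypotheses of Theorem \ref{the3}, all of which are stated for arbitrary $f$.

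The main (and essentially only) subtle point is the identification $f(N)^{2}=Jf(N)$. This depends on $M$ being multiplication and on the product of submodules being independent of the chosen presentation, as in \cite[Theorem 3.4]{RA2003}, so that the symbolic square agrees with the ideal-times-submodule operation appearing in the conclusion of Theorem \ref{the3}. Once this is granted, no further calculation is required.
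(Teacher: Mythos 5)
Your proposal is correct and follows essentially the same route as the paper: both rewrite $f(N)^{2}$ as an ideal times $f(N)$, use full invariance ($f(N)\subseteq N$) to place that ideal inside $(N:_{R}M)$, and then invoke Theorem \ref{the3}. The only cosmetic difference is that you work with an arbitrary presentation $f(N)=JM$ and apply the theorem with $I=J$, while the paper uses the canonical presentation $f(N)=(f(N):_{R}M)M$ and the ``in particular'' clause $(N:_{R}M)f(N)=0_{M}$.
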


\begin{proof}
Let $M$ be multiplication. Then $f(N)^{2}=(f(N):_{R}M)M(f(N):_{R}%
M)M=(f(N):_{R}M)(f(N):_{R}M)M=(f(N):_{R}M)f(N)\subseteq(N:_{R}M)f(N)$, since
$N$ is fully invariant. By Theorem \ref{the3}, $(N:_{R}M)f(N)=0_{M},$ so
$f(N)^{2}=0_{M}.$
\end{proof}

\begin{corollary}
Let $M$ be a multiplication $R$-module and $N,K$ be fully invariant and
$S$-weakly prime submodules of $M$ that are not $S$-prime. Then
$f(N)f(K)\subseteq\sqrt{0_{M}}.$
\end{corollary}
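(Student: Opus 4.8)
The plan is to reduce everything to Corollary \ref{cor n2}, which tells us that $f(N)^{2}=0_{M}$ and $f(K)^{2}=0_{M}$ for every $f\in S$, since both $N$ and $K$ are fully invariant $S$-weakly prime submodules that are not $S$-prime and $M$ is multiplication. The target inclusion $f(N)f(K)\subseteq\sqrt{0_{M}}$ will follow once I show that the submodule $f(N)f(K)$ is nilpotent in the module-product sense, and then appeal to Ameri's description of the radical, $\sqrt{0_{M}}=\{m\in M:m^{k}\subseteq 0_{M}$ for some positive integer $k\}$, recalled in the Introduction from \cite{RA2003}.

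First I would write $f(N)=I_{1}M$ and $f(K)=I_{2}M$ for ideals $I_{1},I_{2}$ of $R$, using that $M$ is multiplication. By the definition of the product of submodules, $f(N)^{2}=I_{1}^{2}M$ and $f(K)^{2}=I_{2}^{2}M$, so the facts $f(N)^{2}=0_{M}$ and $f(K)^{2}=0_{M}$ translate into $I_{1}^{2}\subseteq Ann(M)$ and $I_{2}^{2}\subseteq Ann(M)$. Then $(f(N)f(K))^{2}=(I_{1}I_{2}M)^{2}=I_{1}^{2}I_{2}^{2}M\subseteq I_{1}^{2}M=0_{M}$, since $I_{2}^{2}\subseteq R$; this uses only that the product of submodules is well defined and is computed on the ideal level. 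Hence the submodule $P:=f(N)f(K)$ satisfies $P^{2}=0_{M}$.

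The last step passes from the nilpotence of $P$ to membership in the radical. For an arbitrary $m\in P$, the inclusion $Rm\subseteq P$ holds, and since the product of submodules respects inclusions (which one checks by replacing each submodule by $(\cdot:_{R}M)M$ and using monotonicity of the residual, so that $Rm\subseteq P$ forces $(Rm:_{R}M)\subseteq(P:_{R}M)$), we obtain $m^{2}=(Rm)(Rm)\subseteq P^{2}=0_{M}$, recalling the paper's convention that $mm'$ denotes the product of $Rm$ and $Rm'$. Thus $m^{2}=0_{M}$, so $m\in\sqrt{0_{M}}$ by the radical characterization with $k=2$. As $m\in P$ was arbitrary, $f(N)f(K)=P\subseteq\sqrt{0_{M}}$, as claimed.

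The main obstacle I anticipate is the bookkeeping around the module product: one must verify that $(f(N)f(K))^{2}$ may legitimately be bounded by $f(N)^{2}$, and that the product respects inclusions so that $Rm\subseteq P$ yields $m^{2}\subseteq P^{2}$. Both points rest on the fact, cited in the Introduction from \cite{RA2003}, that in a multiplication module the product of submodules is independent of presentation and hence reduces to ideal multiplication, which makes these manipulations routine once the presentations $f(N)=I_{1}M$ and $f(K)=I_{2}M$ are fixed.
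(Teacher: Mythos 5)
Your proof is correct. It rests on the same two ingredients as the paper's own argument, namely Corollary \ref{cor n2} and Ameri's description $\sqrt{N}=\{m\in M: m^{k}\subseteq N \text{ for some } k\}$ in multiplication modules, but the middle of the argument is genuinely different. The paper argues factor by factor: for each $f(b)\in f(K)$ one has $f(b)^{2}=Rf(b)Rf(b)\subseteq f(K)^{2}=0_{M}$, hence $f(K)\subseteq\sqrt{0_{M}}$, and similarly $f(N)\subseteq\sqrt{0_{M}}$; the conclusion then follows at once from the containment $NK\subseteq N\cap K$ recorded in the Introduction, so that $f(N)f(K)\subseteq f(K)\subseteq\sqrt{0_{M}}$. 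You instead prove the stronger statement that the product submodule $P=f(N)f(K)$ itself satisfies $P^{2}=0_{M}$, by fixing presentations $f(N)=I_{1}M$, $f(K)=I_{2}M$ and reducing everything to ideal arithmetic, and only then pass to elements of $P$ via monotonicity of the submodule product. Both routes are sound: yours yields the extra fact $(f(N)f(K))^{2}=0_{M}$ (nilpotence of the product, not merely membership in the nilradical), at the cost of the bookkeeping you flag, namely verifying that the product respects inclusions; the paper's route sidesteps that bookkeeping entirely by using $NK\subseteq N\cap K$, which makes its proof shorter.
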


\begin{proof}
Assume that $f(b)\in f(K).$ Then $f(b)^{2}=Rf(b)Rf(b)\subseteq f(K)^{2}%
=0_{M},$ by Corollary \ref{cor n2}. Then we get $f(K)\subseteq\sqrt{0_{M}}.$
Similarly, $f(N)\subseteq\sqrt{0_{M}}.$ Then we obtain $f(N)f(K)\subseteq
\sqrt{0_{M}}.$
\end{proof}

For the next proof, we will need the following Lemma:

\begin{lemma}
\label{lemma atani}(\cite{SF2007}, Lemma 2.5) Let $M$ be a multiplication
module over $R$. Let $N$ and $K$ be submodules of $M$. Then the followings are hold:

\begin{enumerate}
\item If for every $a\in N,aK=0_{M},$ then $NK=0_{M}.$

\item If for every $b\in K,Nb=0_{M},$ then $NK=0_{M}.$

\item If for every $a\in N,b\in K,ab=0_{M},$ then $NK=0_{M}.$
\end{enumerate}
\end{lemma}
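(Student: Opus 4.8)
The plan is to reduce everything to the distributive behaviour of the submodule product in a multiplication module together with the elementary observation that every submodule is the sum of the cyclic submodules generated by its elements. Throughout I write $(X:_R M)$ for the residual and recall that in a multiplication module every submodule $X$ satisfies $X=(X:_R M)M$, that for $X=I_1M$ and $Y=I_2M$ the product is $XY=I_1I_2M$, and that this product is independent of the chosen presentations by \cite[Theorem 3.4]{RA2003}. I also use the convention fixed in the introduction, namely that for $a\in M$ the expression $aK$ denotes the product of the cyclic submodule $Ra$ with $K$.

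First I would establish the key identity $NK=\sum_{a\in N}(Ra)K$. Since $N=\sum_{a\in N}Ra$ and each cyclic submodule satisfies $Ra=(Ra:_R M)M$, I may write $N=IM$ with $I=\sum_{a\in N}(Ra:_R M)$, using the standard ideal identity $\sum_i(I_iM)=(\sum_iI_i)M$. Putting $J=(K:_R M)$ so that $K=JM$, the definition of the product and its independence of presentation give
\[
NK=IJM=\Bigl(\sum_{a\in N}(Ra:_R M)\Bigr)JM=\sum_{a\in N}(Ra:_R M)JM=\sum_{a\in N}(Ra)K,
\]
where in the last step I used $(Ra)K=(Ra:_R M)(K:_R M)M=(Ra:_R M)JM$.

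With this identity in hand part (1) is immediate: if $aK=(Ra)K=0_M$ for every $a\in N$, then $NK=\sum_{a\in N}(Ra)K=0_M$. Part (2) is the mirror image; decomposing $K=\sum_{b\in K}Rb$ in the second factor (the product $IJM$ being symmetric in its two ideal arguments) gives $NK=\sum_{b\in K}N(Rb)=\sum_{b\in K}Nb$, so $Nb=0_M$ for all $b\in K$ forces $NK=0_M$. Finally, part (3) follows by combining the two: fixing $a\in N$, one has $(Ra)(Rb)=ab=0_M$ for every $b\in K$, so part (2) applied with $N$ replaced by the cyclic submodule $Ra$ yields $(Ra)K=aK=0_M$; as this holds for every $a\in N$, part (1) gives $NK=0_M$.

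The main obstacle I anticipate is making the distributive identity fully rigorous, specifically justifying that $I=\sum_{a\in N}(Ra:_R M)$ is a legitimate presentation of $N$ as $IM$ and that the product is additive in each argument. Both rest on the presentation-independence of \cite[Theorem 3.4]{RA2003} together with the ideal identity $\sum_i(I_iM)=(\sum_iI_i)M$; once these are recorded the three claims become purely formal, and the symmetry between parts (1) and (2) needs no separate computation.
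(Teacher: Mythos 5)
There is no proof in the paper to compare yours against: Lemma \ref{lemma atani} is quoted verbatim from \cite{SF2007} (Lemma 2.5) and used as a black box, so your argument stands or falls on its own merits. It stands. The key identity $NK=\sum_{a\in N}(Ra)K$ is correctly justified: $N=\sum_{a\in N}Ra$, each $Ra$ equals $(Ra:_{R}M)M$ because $M$ is a multiplication module, the elementary identity $\sum_{i}(I_{i}M)=(\sum_{i}I_{i})M$ turns this into a presentation $N=IM$ with $I=\sum_{a\in N}(Ra:_{R}M)$, and presentation-independence (\cite[Theorem 3.4]{RA2003}, recorded in the paper's introduction) lets you compute $NK=IJM$ with $J=(K:_{R}M)$; distributivity of ideal multiplication over arbitrary sums of ideals then gives $NK=\sum_{a\in N}(Ra:_{R}M)JM=\sum_{a\in N}(Ra)K$. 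Parts (1) and (2) follow at once from this identity and its mirror image, and your reduction of (3) --- fix $a\in N$, apply (2) to the pair $(Ra,K)$ to conclude $aK=0_{M}$, then apply (1) --- is valid. The only conventions you rely on, namely that $aK$ means $(Ra)K$ and $ab$ means $(Ra)(Rb)$, are exactly those fixed in the paper, so your write-up is a legitimate self-contained replacement for the citation; it is essentially the natural proof and in particular fills in the two distributivity facts that such arguments usually leave implicit.
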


\begin{theorem}
\label{the4}Let $M$ be a finitely generated faithful multiplication $R$-module
and $N$ be a fully invariant and $S$-weakly prime submodule of $M$ that is not
$S$-prime. If $f\in S$ is onto, then $f(N)f(\sqrt{0_{M}})=0_{M}.$
\end{theorem}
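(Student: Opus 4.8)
The plan is to trade the endomorphism $f$ for the submodules themselves and then reduce to a pure product statement. First I would use that $f$ is onto: since $M$ is a multiplication module every submodule has the form $I M$, so $\sqrt{0_{M}}=(\sqrt{0_{M}}:_{R}M)M$, and by $R$-linearity
\[
f(\sqrt{0_{M}})=(\sqrt{0_{M}}:_{R}M)f(M)=(\sqrt{0_{M}}:_{R}M)M=\sqrt{0_{M}},
\]
because $f(M)=M$. Thus the goal $f(N)f(\sqrt{0_{M}})=0_{M}$ becomes $f(N)\sqrt{0_{M}}=0_{M}$, and since $N$ is fully invariant we have $f(N)\subseteq N$, so the product is monotone and it suffices to prove the $f$-free statement $N\sqrt{0_{M}}=0_{M}$. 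Finally, by Lemma \ref{lemma atani} it is enough to check that $ac=0_{M}$ for every $a\in N$ and $c\in\sqrt{0_{M}}$; writing $a=(N:_{R}M)M$ this amounts to showing $rc=0_{M}$ for every $r\in(N:_{R}M)$ and $c\in\sqrt{0_{M}}$.

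Before the main estimate I would record the structural inputs. Applying Theorem \ref{the3} with the ideal $I=(N:_{R}M)$ and $f=\mathrm{id}_{M}$ gives $(N:_{R}M)N=0_{M}$, hence $(N:_{R}M)^{2}M=0_{M}$ and, by faithfulness, $(N:_{R}M)^{2}=0$. From Corollary \ref{cor n2} (and the corollary just after it) I get $f(N)^{2}=0_{M}$ together with $f(N)\subseteq\sqrt{0_{M}}$, so the elements of $f(N)$ are square-zero. I also record the non-$S$-prime witness: since $N$ is $S$-weakly prime but not $S$-prime, there exist $g\in S$ and $m_{0}\in M$ with $m_{0}\notin N$, $g(M)\nsubseteq N$, and necessarily $g(m_{0})=0_{M}$ (otherwise $S$-weak primeness would force $m_{0}\in N$ or $g(M)\subseteq N$).

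For the core argument I would fix $r\in(N:_{R}M)$ and $c\in\sqrt{0_{M}}$ with $c^{k}=0_{M}$, assume $rc\neq0_{M}$, and aim for a contradiction. Note first that $rc\in rM\subseteq N$. The adding-element trick then applies exactly as in the proof of Theorem \ref{the3}: if $g(rc)\neq0_{M}$, then $m_{0}+rc\notin N$ (because $rc\in N$ while $m_{0}\notin N$), whereas $0_{M}\neq g(m_{0}+rc)=g(m_{0})+g(rc)=g(rc)\in g(N)\subseteq N$ by full invariance; $S$-weak primeness then forces $m_{0}+rc\in N$ or $g(M)\subseteq N$, both impossible. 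This settles every case except the residual one $g(rc)=0_{M}$. I expect the main obstacle to be precisely this residual case — equivalently, proving the full identity $(N:_{R}M)\sqrt{0_{M}}=0_{M}$, i.e. that the square-zero ideal $(N:_{R}M)$ annihilates all of $\sqrt{0_{M}}$ and not merely $N$. My intended remedy is a secondary two-case analysis in the spirit of the proof of Proposition \ref{pro5}: having shown $rc\in N$ with $(rc)^{2}\in(N:_{R}M)^{2}M=0_{M}$, I would perturb $c$ by an element of $N$ (using full invariance to keep images inside $N$) so as to manufacture a map-value in $N\setminus\{0_{M}\}$ that still has image not contained in $N$, again contradicting $S$-weak primeness and forcing $rc=0_{M}$. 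Once $rc=0_{M}$ holds for all such $r,c$, Lemma \ref{lemma atani} gives $N\sqrt{0_{M}}=0_{M}$, and the reductions of the first paragraph yield $f(N)f(\sqrt{0_{M}})=0_{M}$.
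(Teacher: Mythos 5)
Your opening reduction is correct and is in fact a genuine streamlining of the paper's argument: since $f$ is onto and $M$ is multiplication, $f(\sqrt{0_{M}})=(\sqrt{0_{M}}:_{R}M)f(M)=\sqrt{0_{M}}$, and full invariance gives $f(N)\subseteq N$, so by monotonicity of the product it suffices to prove the $f$-free identity $N\sqrt{0_{M}}=0_{M}$ (the paper instead carries a general onto $f$ through the whole proof). Your recorded structural facts are also fine as consequences of Theorem \ref{the3} and Corollary \ref{cor n2}, as is the observation that a witness $g,m_{0}$ of non-$S$-primeness must satisfy $g(m_{0})=0_{M}$.

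The gap is in the core argument, and it is fatal as written. The adding-element trick produces a contradiction only when $g(rc)\neq 0_{M}$; but the very same trick, applied to any $n\in N$ with $g(n)\neq 0_{M}$, shows that the witness $g$ necessarily satisfies $g(N)=0_{M}$ (this is exactly Case 1 in the proof of Theorem \ref{the3}). Since $rc\in N$, your ``residual case'' $g(rc)=0_{M}$ is not residual --- it is the only case that ever occurs, so the trick yields nothing at all. Nor can your remaining inputs close it: $(N:_{R}M)^{2}=0_{R}$ by itself does not force $(N:_{R}M)\sqrt{0_{R}}=0_{R}$ (in $R=k[x,y]/(x^{2},y^{2})$ the ideal $(x)$ is square-zero yet $(x)(x,y)\neq 0$), so some use of weak primeness on elements \emph{outside} $N$ is unavoidable; and the proposed ``perturbation of $c$ by elements of $N$'' has no construction behind it --- since the only available map with $h(M)\nsubseteq N$ and $h(c)\in N$ is the witness $g$, and $g$ kills $N$, perturbing inside $N$ cannot manufacture a nonzero map-value. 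What the paper does at precisely this point is the actual content of the theorem: for $x\in\sqrt{0_{M}}$ with $x\notin N$, it invokes the colon characterization of $S$-weak primeness (Theorem \ref{the1}) to split into $(N:_{R}f(x))=(0_{M}:_{R}f(x))$, where the product dies immediately, or $(N:_{R}f(x))=(N:_{R}f(M))$, where it takes the minimal $n$ with $x^{n}=0_{M}$, writes $Rx=JM$, uses faithfulness and Lemma \ref{lemma fg} to get $J^{n}=0_{R}$, deduces $0_{M}\neq Rf(x)^{n-1}\subseteq N$, and then cites Theorem 2.6 of \cite{SF2007} for weakly prime submodules to conclude $Rx\subseteq N$, a contradiction. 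This nilpotency-index argument (or a substitute for it) is entirely missing from your proposal, so the proof is incomplete.
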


\begin{proof}
Let $y=f(x)\in f(\sqrt{0_{M}})$ such that $x\in\sqrt{0_{M}}.$ Then there
exists two ideals $I,J$ in $R$ such that $f(N)=IM$ and $Rx=JM$. Then as $f$ is
onto, one see that $Rf(x)=f(Rx)=f(JM)=Jf(M)=JM.$ For $f(x),$ we have 2 cases :

Case 1: Let $f(x)\in f(N).$ Then $Rf(x)\subseteq f(N).$ By Lemma
\ref{lemma fg}, we get $J\subseteq I.$ Thus with Corollary \ref{cor n2}, we
have $f(N)Rf(x)=IJM\subseteq f(N)^{2}=0_{M}.$ By Lemma \ref{lemma atani},
$f(N)f(\sqrt{0_{M}})=0_{M}.$

Case 2: Let $f(x)\notin f(N).$ Then we get $x\notin N.$ By Theorem \ref{the1},
$(N:_{R}f(x))=(0_{M}:_{R}f(x))$ or $(N:_{R}f(x))=(N:_{R}f(M)).$

Assume that $(N:_{R}f(x))=(0_{M}:_{R}f(x)).$ Thus $(N:_{R}M)M\subseteq
(N:_{R}f(x))M=(0_{M}:_{R}f(x))M,$ so, as $N$ is fully invariant,
$f(N)\subseteq(0_{M}:_{R}f(x))M$. Then $f(N)Rf(x)\subseteq(0_{M}%
:_{R}f(x))Rf(x)=0_{M},$ i.e., $f(N)f(x)=0_{M}.$ By Lemma \ref{lemma atani},
$f(N)f(\sqrt{0_{M}})=0_{M}.$

Now, suppose that $(N:_{R}f(x))=(N:_{R}f(M)).$ As $x\in\sqrt{0_{M}},$ there
exists a smallest positive integer $n$ such that $x^{n}=0_{M}$ and
$x^{n-1}\neq0_{M}.$ Then we see $Rx^{n}=J^{n}M=0_{M}$ and $Rx^{n-1}%
=J^{n-1}M\neq0_{M}.$ Hence, since $Rf(x)=JM,$ one get $Rf(x)^{n}=J^{n}M=0_{M}$
and $Rf(x)^{n-1}=J^{n-1}M\neq0_{M}.$ Moreover, we have $J^{n}M=0_{M}$ implies
$J^{n}=0_{R},$ by Lemma \ref{lemma fg}. Then it is clear that $J^{n-1}%
\subseteq(I:_{R}J).$ Hence, as $N$ is fully invariant, $0_{M}\neq
Rf(x)^{n-1}=J^{n-1}M\subseteq(IM:_{R}JM)M=(f(N):_{R}Rf(x))M\subseteq
(N:_{R}Rf(x))M.$ Then by our hypothesis and as $f$ is onto, $0_{M}\neq
Rf(x)^{n-1}\subseteq(N:_{R}Rf(x))M=(N:_{R}f(M))M=(N:_{R}M)M=N,$ this implies
$0_{M}\neq f(x^{n-1})\subseteq N.$ Since $N$ is $S$-weakly prime, we get
$0_{M}\neq x^{n-1}\subseteq N$ or $f(M)\subseteq N.$ The second one
contradicts with $f(x)\notin N.$ As every $S$-weakly prime is a weakly prime
submodule and by Theorem 2.6 in \cite{SF2007}, $0_{M}\neq Rx^{n-1}\subseteq N$
implies $Rx\subseteq N,$ so $f(x)\in f(N),$ a contradiction.
\end{proof}

\begin{corollary}
Let $M$ be a finitely generated faithful multiplication $R$-module and $N,K$
be fully invariant and $S$-weakly prime submodules of $M$ that are not prime.
If $f\in S$ is onto, then $f(N)f(K)=0_{M}.$
\end{corollary}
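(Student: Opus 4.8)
The plan is to upgrade the containment $f(N)f(K)\subseteq\sqrt{0_{M}}$ to the equality $f(N)f(K)=0_{M}$ by exploiting the surjectivity of $f$ together with Theorem \ref{the4}. First note that, since every $S$-prime submodule is prime, a submodule that is not prime is automatically not $S$-prime; hence both $N$ and $K$ meet the hypotheses of Theorem \ref{the4} and of Corollary \ref{cor n2}.

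I would begin by showing that the radical is fixed by an onto endomorphism. Because $M$ is a multiplication module, write $\sqrt{0_{M}}=IM$ for some ideal $I$ of $R$. Since $f$ is $R$-linear and onto, $f(\sqrt{0_{M}})=f(IM)=If(M)=IM=\sqrt{0_{M}}$. Therefore Theorem \ref{the4} applied to $N$ gives not merely $f(N)f(\sqrt{0_{M}})=0_{M}$, but in fact $f(N)\sqrt{0_{M}}=0_{M}$.

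Next I would locate $f(K)$ inside the radical. By Corollary \ref{cor n2} we have $f(K)^{2}=0_{M}$, so for each $b\in K$ one gets $f(b)^{2}=Rf(b)Rf(b)\subseteq f(K)^{2}=0_{M}$, whence $f(b)\in\sqrt{0_{M}}$; thus $f(K)\subseteq\sqrt{0_{M}}$. It remains to use the monotonicity of the product of submodules in a multiplication module: writing $f(N)=AM$, $f(K)=BM$ and $\sqrt{0_{M}}=IM$, the inclusion $f(K)\subseteq\sqrt{0_{M}}$ forces $B\subseteq I+Ann(M)=I$ by Lemma \ref{lemma fg} and faithfulness, so that $f(N)f(K)=ABM\subseteq AIM=f(N)\sqrt{0_{M}}$. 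Combining this with the previous step yields $f(N)f(K)\subseteq f(N)\sqrt{0_{M}}=0_{M}$, hence $f(N)f(K)=0_{M}$.

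The only delicate point is the identity $f(\sqrt{0_{M}})=\sqrt{0_{M}}$, which is precisely where the onto hypothesis on $f$ is needed: it converts Theorem \ref{the4} from a statement about $f(N)f(\sqrt{0_{M}})$ into the directly usable form $f(N)\sqrt{0_{M}}=0_{M}$. Everything else is a routine application of Lemma \ref{lemma fg} and the monotonicity of the multiplication-module product.
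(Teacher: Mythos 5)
Your proof is correct, but it takes a genuinely different route from the paper's. You derive the corollary from Theorem \ref{the4}: you observe that an onto endomorphism fixes the radical (writing $\sqrt{0_{M}}=IM$, surjectivity gives $f(\sqrt{0_{M}})=If(M)=IM=\sqrt{0_{M}}$), so Theorem \ref{the4} yields $f(N)\sqrt{0_{M}}=0_{M}$ directly; you then place $f(K)$ inside $\sqrt{0_{M}}$ via Corollary \ref{cor n2} and finish by monotonicity of the product through Lemma \ref{lemma fg}. The paper obtains $f(K)\subseteq\sqrt{0_{M}}$ in the same way, but then it never touches Theorem \ref{the4}: it uses full invariance of $N$ to get $f(N)f(K)\subseteq f(N)\sqrt{0_{M}}\subseteq N\sqrt{0_{M}}$ and closes by citing an external result (Theorem 2.7 of \cite{SF2007}: a weakly prime submodule that is not prime satisfies $N\sqrt{0_{M}}=0_{M}$). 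The trade-off is instructive: your argument is self-contained within the paper's own results, explains why this corollary naturally follows Theorem \ref{the4}, and makes explicit a point the paper elides (that ``not prime'' implies ``not $S$-prime'', which is needed to invoke Corollary \ref{cor n2} and Theorem \ref{the4} at all); on the other hand, the paper's argument never uses the hypothesis that $f$ is onto, so it actually proves a slightly stronger statement, whereas your route needs surjectivity in an essential way, both to apply Theorem \ref{the4} and to establish $f(\sqrt{0_{M}})=\sqrt{0_{M}}$.
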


\begin{proof}
Assume that $f(b)\in f(K).$ Then $f(b)^{2}\subseteq f(K)^{2}=0_{M},$ by
Corollary \ref{cor n2}. Then we get $f(K)\subseteq\sqrt{0_{M}}.$ As $N$ is
fully invariant, one see $f(N)f(K)\subseteq f(N)\sqrt{0_{M}}\subseteq
N\sqrt{0_{M}}.$ Since $N$ is $S$-weakly prime (so weakly prime) and not prime,
we know $N\sqrt{0_{M}}=0_{M}$, by the help of \ Theorem 2.7 in \cite{SF2007}.
Consequently, $f(N)f(K)=0_{M}.$
\end{proof}

\begin{corollary}
\label{cor0}Let $M$ be a finitely generated faithful multiplication module
over $R$ with unique maximal submodule $K$ and every prime of $M$ is maximal.
Let $N$ be a fully invariant and $S$-weakly prime submodule of $M$. If $f\in
S$ is onto, then $N=K$ or $f(N)f(K)=0_{M}.$
\end{corollary}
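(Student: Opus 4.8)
The plan is to reduce everything to Theorem \ref{the4} by identifying the radical $\sqrt{0_{M}}$ with the unique maximal submodule $K$, after first eliminating the case in which $N$ happens to be $S$-prime.

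First I would dispose of the trivial disjunct: if $N=K$ the conclusion already holds, so assume $N\neq K$. Under this assumption I claim $N$ is not $S$-prime. Indeed, every $S$-prime submodule is prime, so if $N$ were $S$-prime it would be a prime submodule; by hypothesis every prime submodule of $M$ is maximal, and since $K$ is the unique maximal submodule this would force $N=K$, a contradiction. Hence $N$ is a fully invariant $S$-weakly prime submodule that is not $S$-prime.

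Next I would pin down the radical. Because every prime submodule of $M$ is maximal and $K$ is the only maximal submodule, the only submodule that can be prime is $K$. Therefore $\sqrt{0_{M}}$, which is the intersection of all prime submodules of $M$, equals $K$ when $K$ is prime and equals $M$ when $M$ has no prime submodule; in either case $K\subseteq\sqrt{0_{M}}$, and applying $f$ gives $f(K)\subseteq f(\sqrt{0_{M}})$.

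Now all the hypotheses of Theorem \ref{the4} are in force: $M$ is finitely generated, faithful and multiplication, $N$ is fully invariant, $S$-weakly prime and (by the first step) not $S$-prime, and $f\in S$ is onto. Theorem \ref{the4} then yields $f(N)f(\sqrt{0_{M}})=0_{M}$. Finally, since $f(K)\subseteq f(\sqrt{0_{M}})$, monotonicity of the product in the multiplication module $M$ gives $f(N)f(K)\subseteq f(N)f(\sqrt{0_{M}})=0_{M}$, so $f(N)f(K)=0_{M}$. The step I expect to be the crux is the identification $K\subseteq\sqrt{0_{M}}$, since it is precisely here that the two structural hypotheses on $M$ are used to make Theorem \ref{the4} applicable; the remaining steps are a routine case split and a direct appeal to that theorem.
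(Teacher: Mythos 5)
Your proof is correct and takes essentially the same route as the paper's: dispose of the $S$-prime case using that every prime is maximal and $K$ is the unique maximal submodule, relate $\sqrt{0_{M}}$ to $K$, and invoke Theorem \ref{the4}. The only differences are cosmetic ones: you argue by contrapositive (assume $N\neq K$, deduce $N$ is not $S$-prime) where the paper splits into cases, and you establish only the containment $K\subseteq\sqrt{0_{M}}$ together with monotonicity of the product, whereas the paper asserts the equality $\sqrt{0_{M}}=K$ outright—your version is in fact slightly more careful about the degenerate possibility that $M$ has no prime submodules.
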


\begin{proof}
If $N$ is $S$-prime, so prime, by our hypothesis $N=K.$ If $N$ is not
$S$-prime, one see $\sqrt{0_{M}}=%
{\displaystyle\bigcap\limits_{N_{i}\in Spec(M)}}
N_{i}=K$, where $Spec(M)$ is the set of all prime submodules of $M.$ Then we
obtain $f(\sqrt{0_{M}})=f(K).$ Thus $f(N)f(K)=$ $f(N)f(\sqrt{0_{M}})=0_{M},$
by Theorem \ref{the4}.
\end{proof}

\begin{corollary}
Let $M$ be a finitely generated faithful module over a local ring $R$ with
unique maximal submodule $K$ and every prime of $M$ is maximal. Let $N$ be a
fully invariant and $S$-weakly prime submodule of $M$. If $f\in S$ is onto,
then $N=K$ or $f(N)f(K)=0_{M}.$
\end{corollary}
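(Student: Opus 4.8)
The plan is to reduce this corollary to the immediately preceding Corollary \ref{cor0}, whose statement has exactly the same conclusion and differs only in that it assumes $M$ to be a multiplication module rather than a module over a local ring. So the entire task is to show that, under the present hypotheses, $M$ is in fact a multiplication module; once this is established, Corollary \ref{cor0} applies verbatim and yields $N=K$ or $f(N)f(K)=0_{M}$.

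To obtain that $M$ is a multiplication module I would prove the stronger fact that $M$ is cyclic, since every cyclic module is a multiplication module (as recorded just before Proposition \ref{pro3}). Let $\mathfrak{m}$ be the maximal ideal of the local ring $R$ and $k=R/\mathfrak{m}$ its residue field. Because $M$ is finitely generated, $V:=M/\mathfrak{m}M$ is a finite-dimensional $k$-vector space; moreover $M$ is faithful and $R\neq 0$, so $M\neq 0$, and Nakayama's lemma forces $\dim_{k}V\geq 1$. The key observation is that the maximal submodules of $M$ correspond bijectively to the hyperplanes (codimension-one subspaces) of $V$: since $R$ is local its only simple module is $k$, so for any maximal submodule $P$ the quotient $M/P$ is annihilated by $\mathfrak{m}$, whence $\mathfrak{m}M\subseteq P$ and $P/\mathfrak{m}M$ is a hyperplane of $V$, and conversely every hyperplane pulls back to a maximal submodule of $M$.

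Then I would argue by counting. If $\dim_{k}V\geq 2$, the space $V$ has at least two distinct hyperplanes (this holds over any field), and these give at least two distinct maximal submodules of $M$, contradicting the hypothesis that $K$ is the unique maximal submodule. Hence $\dim_{k}V=1$, and a single element of $M$ lifting a basis vector of $V$ generates $M$ by Nakayama, so $M$ is cyclic and therefore a multiplication module.

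Finally I would verify that every hypothesis of Corollary \ref{cor0} is now in place — $M$ is finitely generated, faithful, a multiplication module, has unique maximal submodule $K$, every prime submodule is maximal, $N$ is fully invariant and $S$-weakly prime, and $f$ is onto — and invoke it to conclude. The only genuinely substantive step, and the one I expect to be the main obstacle, is the identification of the maximal submodules of $M$ with the hyperplanes of $M/\mathfrak{m}M$ together with the Nakayama argument pinning down $\dim_{k}V=1$; everything after that is a direct appeal to Corollary \ref{cor0}.
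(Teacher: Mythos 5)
Your proof is correct, and its skeleton is exactly the paper's: show that $M$ is cyclic, conclude it is a multiplication module, and then invoke Corollary \ref{cor0}, all of whose hypotheses are then in force. The only difference lies in how cyclicity is obtained. The paper disposes of this step with a citation --- Corollary 1 of \cite{CPL1995} --- whereas you prove it from scratch: every maximal submodule $P$ of $M$ contains $\mathfrak{m}M$ (since the only simple module over the local ring $R$ is $R/\mathfrak{m}$, so $\mathfrak{m}$ annihilates $M/P$), so maximal submodules correspond exactly to hyperplanes of the finite-dimensional vector space $V=M/\mathfrak{m}M$; if $\dim_{k}V\geq 2$ there would be two distinct hyperplanes and hence two distinct maximal submodules, contradicting the uniqueness of $K$; thus $\dim_{k}V=1$ and Nakayama's lemma lifts a generator of $V$ to a generator of $M$. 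Every step of this argument is sound (the existence of $K$ even makes the faithfulness-based verification that $M\neq 0$ redundant). What your route buys is self-containedness, and it makes visible that neither faithfulness nor the hypothesis that every prime submodule is maximal plays any role in the cyclicity step --- both are needed only so that Corollary \ref{cor0} applies; what the paper's route buys is brevity, at the cost of sending the reader to an external source for precisely the one substantive point.
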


\begin{proof}
By Corollary 1 in \cite{CPL1995}, $M$ is cyclic. Thus $M$ is multiplication
$R$-module. By Corollary \ref{cor0}, it is done.
\end{proof}

\bigskip

Let $M_{1},M_{2}$ be $R$-modules and we know that $M=M_{1}\times M_{2}$ is an
$R$-module. For every $f_{i}\in End(M_{i}),$ let define $f:M\rightarrow M$
with $f((m_{1},m_{2}))=(f_{1}(m_{1}),f_{2}(m_{2}))$, for every $(m_{1}%
,m_{2})\in M,$ $i=1,2.$ Then one can easily see, $f\in End(M)$ and
$f(M)=f_{1}(M_{1})\times f_{2}(M_{2}).$ Also, we use the following notations:
$S_{1}=End(M_{1})$, $S_{2}=End(M_{2})$ and $S=S_{1}\times S_{2}.$

\begin{theorem}
\label{prokart}Let $M_{1},M_{2}$ be $R$-modules and $N_{1}$ be a proper
submodule of $M_{1}$. Then the followings are equivalent:

\begin{enumerate}
\item $N=N_{1}\times M_{2}$ is an $S$-prime submodule of $M=M_{1}\times
M_{2}.$

\item $N_{1}$ is an $S_{1}$-prime submodule of $M_{1}.$
\end{enumerate}
\end{theorem}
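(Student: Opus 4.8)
The plan is to argue directly from the definition of $S$-prime submodule, using the explicit description of endomorphisms of $M=M_{1}\times M_{2}$ recorded just before the statement: every $f=(f_{1},f_{2})\in S=S_{1}\times S_{2}$ acts by $f((m_{1},m_{2}))=(f_{1}(m_{1}),f_{2}(m_{2}))$, and $f(M)=f_{1}(M_{1})\times f_{2}(M_{2})$. Before treating either implication I would first record that $N=N_{1}\times M_{2}$ is a proper submodule of $M$ exactly because $N_{1}$ is proper in $M_{1}$ (indeed $N_{1}\times M_{2}=M_{1}\times M_{2}$ forces $N_{1}=M_{1}$), so both statements concern genuine proper submodules.

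For $(2)\Longrightarrow(1)$, I would take an arbitrary $f=(f_{1},f_{2})\in S$ and $m=(m_{1},m_{2})\in M$ with $f(m)\in N$. Reading off coordinates, $f(m)=(f_{1}(m_{1}),f_{2}(m_{2}))\in N_{1}\times M_{2}$ forces $f_{1}(m_{1})\in N_{1}$, the second-coordinate condition being vacuous. Applying that $N_{1}$ is $S_{1}$-prime to the pair $(f_{1},m_{1})$ yields either $m_{1}\in N_{1}$ or $f_{1}(M_{1})\subseteq N_{1}$. In the first case $m=(m_{1},m_{2})\in N_{1}\times M_{2}=N$; in the second, $f(M)=f_{1}(M_{1})\times f_{2}(M_{2})\subseteq N_{1}\times M_{2}=N$. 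Either way the $S$-prime condition for $N$ is verified.

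For $(1)\Longrightarrow(2)$, the key idea, and the only genuinely nonautomatic step, is to lift a given endomorphism $f_{1}\in S_{1}$ to $M$ using the identity in the second slot rather than the zero map. Given $f_{1}\in S_{1}$ and $m_{1}\in M_{1}$ with $f_{1}(m_{1})\in N_{1}$, I would set $f=(f_{1},\mathrm{id}_{M_{2}})\in S$ and test it at $m=(m_{1},0_{M_{2}})$. Then $f(m)=(f_{1}(m_{1}),0_{M_{2}})\in N_{1}\times M_{2}=N$, so $S$-primeness of $N$ gives $m\in N$ or $f(M)\subseteq N$. The first yields $m_{1}\in N_{1}$; the second yields $f(M)=f_{1}(M_{1})\times M_{2}\subseteq N_{1}\times M_{2}$, hence $f_{1}(M_{1})\subseteq N_{1}$. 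Choosing $\mathrm{id}_{M_{2}}$ is precisely what makes the containment $f(M)\subseteq N$ collapse cleanly onto $f_{1}(M_{1})\subseteq N_{1}$; with the zero map the second-coordinate datum would degenerate to $\{0_{M_{2}}\}\subseteq M_{2}$ and the information about $f_{1}(M_{1})$ would be lost.

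I expect no serious obstacle beyond choosing this test endomorphism: everything else is coordinatewise bookkeeping. The one subtlety to keep in mind is that the ambient class of endomorphisms here is $S=S_{1}\times S_{2}$ rather than the full $\mathrm{End}(M)$, so I must only invoke diagonal maps $(f_{1},f_{2})$; the lift $(f_{1},\mathrm{id}_{M_{2}})$ is of this admissible form, so the whole argument stays within the intended class. Alternatively one could route both directions through the submodule characterization in Proposition \ref{pro4}, but the elementwise version above is shorter and more transparent.
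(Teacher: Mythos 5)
Your proof is correct and takes essentially the same approach as the paper: both directions verify the definition directly via the coordinatewise action of $f=(f_{1},f_{2})$, the paper's $(1)\Rightarrow(2)$ using an arbitrary pair $(m_{2},f_{2})$ where you specialize to $(0_{M_{2}},\mathrm{id}_{M_{2}})$. One side remark of yours is inaccurate, though it does not affect the validity of your argument: with $f_{2}=0$ one still has $f(M)=f_{1}(M_{1})\times\{0_{M_{2}}\}\subseteq N_{1}\times M_{2}$ if and only if $f_{1}(M_{1})\subseteq N_{1}$, so no information about $f_{1}$ is lost and the zero map would work just as well here; the choice of $f_{2}$ only becomes delicate in the weakly prime analogue (Theorem \ref{thekart}), where whether $f(m)\neq 0_{M}$ can depend on the second coordinate.
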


\begin{proof}
$(1)\Longrightarrow(2):$ Suppose that $N=N_{1}\times M_{2}$ is an $S$-prime
submodule of $M=M_{1}\times M_{2}.$ Let $f_{1}(m_{1})\in N_{1},$ for some
$m_{1}\in M_{1}$ and $f_{1}\in End(M_{1}).$ Then for every $m_{2}\in M_{2}$
and $f_{2}\in End(M_{2}),$ we get $f((m_{1},m_{2}))=(f_{1}(m_{1}),f_{2}%
(m_{2}))\in N_{1}\times M_{2}=N.$ So, as $N=N_{1}\times M_{2}$ is an $S$-prime
submodule of $M=M_{1}\times M_{2},$ we get $(m_{1},m_{2})\in N$ or
$f(M_{1}\times M_{2})\subseteq N.$ Thus, by $f(M)=f_{1}(M_{1})\times
f_{2}(M_{2}),$ one see $m_{1}\in N_{1}$ or $f_{1}(M_{1})\subseteq N_{1},$
i.e., $N_{1}$ is $S_{1}$-prime.

$(2)\Longrightarrow(1):$ Let $N_{1}$ be an $S_{1}$-prime submodule of $M_{1}.$
Assume that $f((m_{1},m_{2}))=(f_{1}(m_{1}),f_{2}(m_{2}))\in N=N_{1}\times
M_{2},$ for some $(m_{1},m_{2})\in M,f_{1}\in End(M_{1})$ and $f_{2}\in
End(M_{2}).$ Then $f_{1}(m_{1})\in N_{1}.$ As $N_{1}$ is $S_{1}$-prime, we
have $m_{1}\in N_{1}$ or $f_{1}(M_{1})\subseteq N_{1}.$ Thus $(m_{1},m_{2})\in
N$ or $f(M_{1}\times M_{2})=f_{1}(M_{1})\times f_{2}(M_{2})\subseteq
N=N_{1}\times M_{2}.$
\end{proof}

\begin{theorem}
\label{thekart}Let $M_{1},M_{2}$ be $R$-modules and $N_{1}$ be a proper
submodule of $M_{1}$. Then the followings are equivalent:

\begin{enumerate}
\item $N=N_{1}\times M_{2}$ is an $S$-weakly prime submodule of $M=M_{1}\times
M_{2}.$

\item $N_{1}$ is $S_{1}$-weakly prime and for every $(m_{1},m_{2})\in M$,
$f_{1}\in S_{1}$ and $f_{2}\in S_{2},$ if $f_{1}(m_{1})=0_{M_{1}},m_{1}\notin
N_{1}$ and $f_{1}(M_{1})\nsubseteq N_{1},$ then $f_{2}(m_{2})=0_{M_{2}}.$
\end{enumerate}
\end{theorem}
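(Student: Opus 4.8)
The plan is to exploit the product structure throughout. Every endomorphism $f\in S=S_1\times S_2$ acts coordinatewise as $f((m_1,m_2))=(f_1(m_1),f_2(m_2))$ with $f(M)=f_1(M_1)\times f_2(M_2)$, and both membership and containment in $N=N_1\times M_2$ collapse onto the first coordinate: $(m_1,m_2)\in N$ iff $m_1\in N_1$, and $f(M)\subseteq N$ iff $f_1(M_1)\subseteq N_1$ (the second-coordinate conditions $m_2\in M_2$ and $f_2(M_2)\subseteq M_2$ hold automatically). With this dictionary the whole argument reduces to a case split on whether the first coordinate $f_1(m_1)$ vanishes.

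For $(1)\Longrightarrow(2)$, I would first verify that $N_1$ is $S_1$-weakly prime by feeding the test element $m=(m_1,0_{M_2})$ into the definition: given $0_{M_1}\neq f_1(m_1)\in N_1$, extending $f_1$ to any $f\in S$ produces $0_M\neq f(m)=(f_1(m_1),0_{M_2})\in N$, so $S$-weak primeness of $N$ yields $m\in N$ or $f(M)\subseteq N$, which translate to $m_1\in N_1$ or $f_1(M_1)\subseteq N_1$. For the auxiliary condition I would argue by contradiction: if $f_1(m_1)=0_{M_1}$, $m_1\notin N_1$, $f_1(M_1)\nsubseteq N_1$ but $f_2(m_2)\neq 0_{M_2}$, then $f((m_1,m_2))=(0_{M_1},f_2(m_2))$ is a nonzero element of $N$, and the two possible conclusions of $S$-weak primeness are ruled out by $m_1\notin N_1$ and $f_1(M_1)\nsubseteq N_1$ respectively.

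For $(2)\Longrightarrow(1)$, I would take $0_M\neq f(m)\in N$ with $m=(m_1,m_2)$, so that $f_1(m_1)\in N_1$ while at least one coordinate of $f(m)$ is nonzero, and then split into cases. If $f_1(m_1)\neq 0_{M_1}$, then $0_{M_1}\neq f_1(m_1)\in N_1$ and $S_1$-weak primeness of $N_1$ immediately gives $m_1\in N_1$ or $f_1(M_1)\subseteq N_1$. If $f_1(m_1)=0_{M_1}$, then necessarily $f_2(m_2)\neq 0_{M_2}$; were both $m_1\notin N_1$ and $f_1(M_1)\nsubseteq N_1$ to hold, the second hypothesis in $(2)$ would force $f_2(m_2)=0_{M_2}$, a contradiction, so again $m_1\in N_1$ or $f_1(M_1)\subseteq N_1$. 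Either way $m\in N$ or $f(M)\subseteq N$, as required.

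The only genuinely delicate point, and the reason the clean statement of Theorem \ref{prokart} does not survive verbatim in the weakly prime setting, is precisely the configuration $f_1(m_1)=0_{M_1}$ with $f_2(m_2)\neq 0_{M_2}$: here $f(m)$ is a nonzero element of $N$ contributed entirely by the second coordinate, so $S_1$-weak primeness of $N_1$ tells us nothing, and one really needs the auxiliary clause in $(2)$ to forbid this situation. I expect this case to be the main obstacle; everything else is routine bookkeeping with the coordinatewise correspondence set up in the first paragraph.
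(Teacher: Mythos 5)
Your proof is correct and follows essentially the same route as the paper's: the same coordinatewise dictionary, the same contradiction argument for the auxiliary clause in $(1)\Rightarrow(2)$, and the same case split on whether $f_{1}(m_{1})$ vanishes in $(2)\Rightarrow(1)$. The only cosmetic difference is that you test with $(m_{1},0_{M_{2}})$ where the paper uses an arbitrary second coordinate $m_{2}$; both work since $f_{1}(m_{1})\neq 0_{M_{1}}$ already makes the image nonzero.
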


\begin{proof}
$(1)\Longrightarrow(2):$ Assume that $N=N_{1}\times M_{1}$ is an $S$-weakly
prime submodule of $M=M_{1}\times M_{2}.$ Firstly, we show that $N_{1}$ is
$S_{1}$-weakly prime. Let $0_{M_{1}}\neq f_{1}(m_{1})\in N_{1},$ for some
$m_{1}\in M_{1}.$ Then for every $m_{2}\in M_{2},$ we get $0_{M}\neq
f((m_{1},m_{2}))=(f_{1}(m_{1}),f_{2}(m_{2}))\in N.$ So, as $N=N_{1}\times
M_{2}$ is an $S$-weakly prime submodule of $M=M_{1}\times M_{2},$ we get
$(m_{1},m_{2})\in N$ or $f(M_{1}\times M_{2})\subseteq N.$ Thus $m_{1}\in
N_{1}$ or $f_{1}(M_{1})\subseteq N_{1},$ i.e., $N_{1}$ is $S_{1}$-weakly
prime. Let $0_{M_{1}}=f_{1}(m_{1})\in N_{1}$ such that $m_{1}\notin N_{1}$ and
$f_{1}(M_{1})\nsubseteq N_{1}.$ Then for every $m_{2}\in M_{2},$ we say
$(m_{1},m_{2})\notin N$ and $f(M_{1}\times M_{2})\nsubseteq N.$ Moreover, if
$f_{2}(m_{2})\neq0_{M_{2}},$ we have $0_{M}\neq f((m_{1},m_{2}))=(f_{1}%
(m_{1}),f_{2}(m_{2}))\in N,$ so $(m_{1},m_{2})\in N$ or $f(M_{1}\times
M_{2})\subseteq N,$ a contradiction. Consequently, $f_{2}(m_{2})=0_{M_{2}}.$

$(2)\Longrightarrow(1):$ Suppose that the condition (2) is true.

Let $0_{M}\neq f((m_{1},m_{2}))=(f_{1}(m_{1}),f_{2}(m_{2}))\in N,$ for some
$(m_{1},m_{2})\in M.$ Then for $f_{1}(m_{1}),$ we have 2 cases:

Case 1: Let $0_{M_{1}}\neq f_{1}(m_{1}).$ Since $f(m_{1})\in N_{1}$ and
$N_{1}$ is $S_{1}$-weakly prime, we get $m_{1}\in N_{1}$ or $f(M_{1})\subseteq
N_{1},$ i.e., $(m_{1},m_{2})\in N$ or $f(M_{1}\times M_{2})\subseteq N.$ Thus,
it is done.

Case 2: Let $0_{M_{1}}=f_{1}(m_{1}).$ Then $0_{M_{2}}\neq f_{2}(m_{2}).$
Assume that $m_{1}\notin N_{1}$ and $f_{1}(M_{1})\nsubseteq N_{1}.$ Then by
(2), $f_{2}(m_{2})=0_{M_{2}},$ a contradiction. So $m_{1}\in N_{1}$ or
$f_{1}(M_{1})\subseteq N_{1}.$ Thus $(m_{1},m_{2})\in N$ or $f(M)\subseteq N.$
\end{proof}

\begin{proposition}
\label{pro1in2}Let $M_{1},M_{2}$ be $R$-modules and $N_{1},N_{2}$ be a proper
submodule of $M_{1},M_{2},$ resceptively. If $N=N_{1}\times N_{2}$ is an
$S$-prime submodule of $M=M_{1}\times M_{2}$, then $N_{1}$ is an $S_{1}$-prime
submodule of $M_{1}$ and $N_{2}$ is an $S_{2}$-prime submodule of $M_{2}.$
\end{proposition}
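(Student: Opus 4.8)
The plan is to prove the two conclusions symmetrically, so I will concentrate on showing that $N_1$ is $S_1$-prime; the argument for $N_2$ is identical after interchanging the roles of the two factors. The key device, consistent with the paragraph preceding Theorem \ref{prokart}, is that any $f_1\in S_1$ can be paired with the \emph{zero} endomorphism $0\in S_2=End(M_2)$ to produce $f=(f_1,0)\in S=S_1\times S_2$, which acts by $f((x_1,x_2))=(f_1(x_1),0_{M_2})$ and satisfies $f(M)=f_1(M_1)\times\{0_{M_2}\}$.

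First I would take $m_1\in M_1$ and $f_1\in S_1$ with $f_1(m_1)\in N_1$, and test the $S$-prime hypothesis on the element $(m_1,0_{M_2})\in M$ using the endomorphism $f=(f_1,0)$. Since $f_1(m_1)\in N_1$ and $0_{M_2}\in N_2$ (as $N_2$ is a submodule), we get $f((m_1,0_{M_2}))=(f_1(m_1),0_{M_2})\in N_1\times N_2=N$. Because $N$ is $S$-prime, either $(m_1,0_{M_2})\in N$ or $f(M)\subseteq N$.

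Then I would unpack the two cases. In the first case $(m_1,0_{M_2})\in N_1\times N_2$ forces $m_1\in N_1$. In the second case $f(M)=f_1(M_1)\times\{0_{M_2}\}\subseteq N_1\times N_2$ forces $f_1(M_1)\subseteq N_1$, the second-coordinate containment $\{0_{M_2}\}\subseteq N_2$ being automatic. Either way $m_1\in N_1$ or $f_1(M_1)\subseteq N_1$, which is exactly the $S_1$-prime condition for $N_1$. Running the same argument with $f=(0,f_2)$ and test element $(0_{M_1},m_2)$ yields that $N_2$ is $S_2$-prime.

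I do not foresee a genuine obstacle: the statement is essentially a projection result, and the only point requiring care is the choice of the companion endomorphism as $0$ rather than, say, the identity, so that the ``$f(M)\subseteq N$'' branch collapses cleanly onto the single relevant coordinate. Using the identity on the other factor would instead demand $M_2\subseteq N_2$, which fails since $N_2$ is proper, so the zero map is precisely what makes the reduction work.
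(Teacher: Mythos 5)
Your proof is correct and is essentially the paper's own argument: the paper also tests the $S$-prime hypothesis on the element $(m_{1},0_{M_{2}})$ with an endomorphism $f=(f_{1},f_{2})$ (taking $f_{2}\in S_{2}$ arbitrary, of which your zero map is a special case) and projects both branches to the first coordinate. One small inaccuracy in your closing remark: taking $f_{2}=id_{M_{2}}$ would not break the argument, since then the branch $f(M)\subseteq N$ is impossible (as $N_{2}$ is proper) and one concludes $m_{1}\in N_{1}$ outright, which still yields the required disjunction.
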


\begin{proof}
Suppose that $N=N_{1}\times N_{2}$ is an $S$-prime submodule of $M=M_{1}\times
M_{2}.$ Now, we will show that $N_{1}$ is an $S_{1}$-prime submodule. Take
$f_{1}\in S_{1}$ such that $f_{1}(m_{1})\in N_{1}$ for some $m_{1}\in M_{1}.$
Also take $f_{2}\in S_{2}.$ Then $f((m_{1},0_{M_{2}}))=(f_{1}(m_{1}%
),f_{2}(0_{M_{2}}))=(f_{1}(m_{1}),0_{M_{2}})\in N_{1}\times N_{2}=N.$ Since
$N$ is $S$-prime, $(m_{1},0_{M_{2}})\in N$ or $f(M_{1}\times M_{2})\subseteq
N.$ This implies that $m_{1}\in N_{1}$ or $f_{1}(M_{1})\subseteq N_{1}.$
Similar argument shows that $N_{2}$ is an $S_{2}$-prime submodule.
\end{proof}

\begin{proposition}
\label{pro2in2}Let $M_{1},M_{2}$ be $R$-modules and $N_{1},N_{2}$ be a proper
submodule of $M_{1},M_{2},$ resceptively. If $N=N_{1}\times N_{2}$ is an
$S$-weakly prime submodule of $M=M_{1}\times M_{2}$, then $N_{1}$ is an
$S_{1}$-weakly prime submodule of $M_{1}$ and $N_{2}$ is an $S_{2}$-weakly
prime submodule of $M_{2}.$
\end{proposition}

\begin{proof}
Assume that $N=N_{1}\times N_{2}$ is an $S$-weakly prime submodule of
$M=M_{1}\times M_{2}.$ Let $f_{1}\in S_{1}$ such that $0_{M_{1}}\neq
f_{1}(m_{1})\in N_{1}$ for some $m_{1}\in M_{1}.$ Take $f_{2}\in S_{2}.$ Then
$0_{M}\neq f((m_{1},0_{M_{2}}))\in N.$ Since $N$ is $S$-weakly prime, we get
either $(m_{1},0_{M_{2}})\in N$ or $f(M_{1}\times M_{2})\subseteq N.$ This
yields that $m_{1}\in N_{1}$ or $f_{1}(M_{1})\subseteq N_{1}.$ Similarly, one
can see that $N_{2}$ is $S_{2}$-weakly prime.
\end{proof}

Let $R_{i}$ be a commutative ring with identity and $M_{i}$ be an $R_{i}%
$-module for $i=1,2.$ Let $R=R_{1}\times R_{2}.$ Then $M=M_{1}\times M_{2}$ is
an $R$-module and each submodule of $M$ is in the form of $N=N_{1}\times
N_{2},$ for some submodules $N_{1}$\ of $M_{1}$\ and $N_{2}$\ of $M_{2},$ see
\cite{hojjat}.

\begin{theorem}
\label{theD1}Let $R=R_{1}\times R_{2}$ be a decomposable ring and
$M=M_{1}\times M_{2}$ be an $R$-module, where $M_{1}$ is an $R_{1}$-module and
$M_{2}$ is an $R_{2}$-module. Suppose that $N=N_{1}\times M_{2}$ is a proper
submodule of $M.$ Then the followings are equivalent:

\begin{enumerate}
\item $N_{1}$ is an $S_{1}$-prime submodule of $M_{1}.$

\item $N$ is an $S$-prime submodule of $M.$

\item $N$ is an $S$-weakly prime submodule of $M.$
\end{enumerate}
\end{theorem}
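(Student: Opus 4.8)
The plan is to establish the cycle $(1)\Rightarrow(2)\Rightarrow(3)\Rightarrow(1)$, exploiting the two structural facts available in this decomposable setting: every $f\in S$ splits as $f=(f_{1},f_{2})$ with $f_{i}\in S_{i}$, so that $f((m_{1},m_{2}))=(f_{1}(m_{1}),f_{2}(m_{2}))$ and $f(M)=f_{1}(M_{1})\times f_{2}(M_{2})$; and every submodule of $M$ has the form $N_{1}\times N_{2}$. Only one implication needs a genuinely new idea, namely $(3)\Rightarrow(1)$, where the splitting of the ring is what forces a weakly prime submodule of this particular shape to be prime.

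For $(1)\Rightarrow(2)$ I would run the computation of Theorem \ref{prokart} verbatim: assuming $N_{1}$ is $S_{1}$-prime, if $f((m_{1},m_{2}))=(f_{1}(m_{1}),f_{2}(m_{2}))\in N_{1}\times M_{2}$, then $f_{1}(m_{1})\in N_{1}$, so $m_{1}\in N_{1}$ or $f_{1}(M_{1})\subseteq N_{1}$, which reads as $(m_{1},m_{2})\in N$ or $f(M)=f_{1}(M_{1})\times f_{2}(M_{2})\subseteq N$. The hypotheses of Theorem \ref{prokart} are not literally the same here ($M_{1},M_{2}$ there are modules over a common ring), so I would reproduce the one-line argument rather than cite the theorem. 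The implication $(2)\Rightarrow(3)$ is immediate, since every $S$-prime submodule is $S$-weakly prime by definition.

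The heart of the proof is $(3)\Rightarrow(1)$. Assume $N=N_{1}\times M_{2}$ is $S$-weakly prime; I want $N_{1}$ to be $S_{1}$-prime. Take $f_{1}\in S_{1}$ and $m_{1}\in M_{1}$ with $f_{1}(m_{1})\in N_{1}$; I must derive $m_{1}\in N_{1}$ or $f_{1}(M_{1})\subseteq N_{1}$, with no nonzero hypothesis available on $f_{1}(m_{1})$. The device is to pad the second coordinate so as to escape the zero clause of the weakly prime definition: set $f=(f_{1},\mathrm{id}_{M_{2}})\in S$ and choose a nonzero $m_{2}\in M_{2}$. Then $f((m_{1},m_{2}))=(f_{1}(m_{1}),m_{2})$ lies in $N_{1}\times M_{2}=N$ and is nonzero since its second coordinate $m_{2}$ is nonzero, so the $S$-weakly prime hypothesis yields $(m_{1},m_{2})\in N$ or $f(M)=f_{1}(M_{1})\times M_{2}\subseteq N$; reading off first coordinates gives exactly $m_{1}\in N_{1}$ or $f_{1}(M_{1})\subseteq N_{1}$.

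The step I expect to be delicate is precisely this escape-from-zero manoeuvre: it silently uses that a nonzero $m_{2}\in M_{2}$ exists, i.e. $M_{2}\neq 0_{M_{2}}$. I would flag the standing assumption that the factor modules are nonzero (the module-theoretic analogue of $R_{2}\neq 0$, which is automatic since $R_{2}$ has an identity); without it a weakly prime $N$ of this form need not be prime, and the equivalence of $(2)$ and $(3)$ would collapse. Apart from isolating this hypothesis, every implication is a routine reading of coordinates, so no further obstacle is anticipated.
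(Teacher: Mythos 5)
Your proposal is correct, and in the one implication that carries any weight it is actually \emph{stronger} than the paper's own argument. The paper proves $(3)\Rightarrow(1)$ by starting from $0_{M_{1}}\neq f_{1}(m_{1})\in N_{1}$, padding with $f_{2}=id_{M_{2}}$ and an arbitrary $m_{2}$; but that hypothesis is the weakly prime one, so the paper's argument as written only establishes that $N_{1}$ is $S_{1}$-\emph{weakly} prime, and the case $f_{1}(m_{1})=0_{M_{1}}$ --- which must be handled to conclude that $N_{1}$ is $S_{1}$-prime --- is never treated. Your escape-from-zero manoeuvre (take an arbitrary $f_{1}(m_{1})\in N_{1}$, choose $m_{2}\neq 0_{M_{2}}$, so that $(f_{1}(m_{1}),m_{2})\in N$ is nonzero by virtue of its second coordinate) is precisely what closes that gap: it converts the weakly prime hypothesis on $N$ into the full prime conclusion for $N_{1}$. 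Your flagged hypothesis $M_{2}\neq\{0_{M_{2}}\}$ is genuinely needed, not a pedantic worry: if $M_{2}=\{0_{M_{2}}\}$, then $N=N_{1}\times\{0_{M_{2}}\}$ is $S$-weakly prime if and only if $N_{1}$ is $S_{1}$-weakly prime, and taking $N_{1}=\{0_{M_{1}}\}$ in any $M_{1}$ admitting a nonzero non-injective endomorphism gives a counterexample to $(3)\Rightarrow(1)$; the paper never confronts this because its restriction to nonzero $f_{1}(m_{1})$ lets it avoid ever needing a nonzero element of $M_{2}$. Your other two implications coincide with the paper's: the $(1)\Rightarrow(2)$ computation is the same coordinate reading (and you are right that reproducing it is cleaner than citing Theorem \ref{prokart}, whose setting is two modules over a common ring), and $(2)\Rightarrow(3)$ is definitional. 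In short: same skeleton as the paper, but your version of $(3)\Rightarrow(1)$ is the complete one, whereas the paper's proof of that step proves a strictly weaker statement.
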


\begin{proof}
$(1)\Longrightarrow(2):$ Let $N_{1}$ be an $S_{1}$-prime submodule of $M_{1}.$

Assume that $f((m_{1},m_{2}))=(f_{1}(m_{1}),f_{2}(m_{2}))\in N=N_{1}\times
M_{2}$ for some $(m_{1},m_{2})\in M.$ Then $f_{1}(m_{1})\in N_{1}.$ As $N_{1}$
is $S_{1}$-prime, we have $m_{1}\in N_{1}$ or $f_{1}(M_{1})\subseteq N_{1}.$
Thus $(m_{1},m_{2})\in N$ or $f(M_{1}\times M_{2})\subseteq N.$

$(2)\Longrightarrow(3):$ It is clear.

$(3)\Longrightarrow(1):$\ Suppose that $N$ is an $S$-weakly prime submodule of
$M.$ Let $0_{M_{1}}\neq f_{1}(m_{1})\in N_{1}$ for some $m_{1}\in M_{1}.$ Put
$f_{2}=id_{M_{2}},$ where $id_{M_{2}}$ denotes the identity homomorphism of
$M_{2}.$ Then for every $m_{2}\in M_{2},$ we get $f((m_{1},m_{2}%
))=(f_{1}(m_{1}),f_{2}(m_{2}))\in N_{1}\times M_{2}=N.$ As $0_{M_{1}}\neq
f_{1}(m_{1}),$ we get $0_{M}\neq(f_{1}(m_{1}),f_{2}(m_{2})).$ By our
hypothesis, $(m_{1},m_{2})\in N$ or $f(M_{1}\times M_{2})\subseteq N.$
Consequently, $m_{1}\in N_{1}$ or $f_{1}(M_{1})\subseteq N_{1}.$
\end{proof}

\begin{theorem}
\label{theD2}Let $R=R_{1}\times R_{2}$ be a decomposable ring and
$M=M_{1}\times M_{2}$ be an $R$-module, where $M_{1}$ is an $R_{1}$-module and
$M_{2}$ is an $R_{2}$-module. Suppose that $N_{1},N_{2}$ be a proper submodule
of $M_{1},M_{2},$ resceptively. Then the followings are hold:

\begin{enumerate}
\item If $N=N_{1}\times N_{2}$ is an $S$-prime submodule of $M=M_{1}\times
M_{2}$, then $N_{1}$ is an $S_{1}$-prime submodule of $M_{1}$ and $N_{2}$ is
an $S_{2}$-prime submodule of $M_{2}.$

\item If $N=N_{1}\times N_{2}$ is an $S$-weakly prime submodule of
$M=M_{1}\times M_{2}$, then $N_{1}$ is an $S_{1}$-weakly prime submodule of
$M_{1}$ and $N_{2}$ is an $S_{2}$-weakly prime submodule of $M_{2}.$
\end{enumerate}
\end{theorem}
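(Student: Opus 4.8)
The plan is to observe that this theorem is, in content, the decomposable-ring analogue of Propositions \ref{pro1in2} and \ref{pro2in2}, and that the same test-element arguments go through verbatim once the endomorphism ring is identified correctly. First I would record the one structural fact that makes everything work. Write $e_{1}=(1_{R_{1}},0)$ and $e_{2}=(0,1_{R_{2}})$ for the orthogonal idempotents of $R=R_{1}\times R_{2}$, so that $M_{i}=e_{i}M$ and $e_{j}M_{i}=0_{M}$ for $j\neq i$. Any $f\in S$ is $R$-linear, hence commutes with multiplication by each $e_{i}$, so that $f(M_{i})=f(e_{i}M)=e_{i}f(M)\subseteq e_{i}M=M_{i}$; thus $f$ respects the decomposition and factors as $f=(f_{1},f_{2})$ with $f_{i}\in S_{i}$. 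This recovers $S=S_{1}\times S_{2}$ together with $f(M)=f_{1}(M_{1})\times f_{2}(M_{2})$, which is precisely the notation fixed before Theorem \ref{prokart}.

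With this identification, for part (1) I would argue exactly as in Proposition \ref{pro1in2}. Assume $N=N_{1}\times N_{2}$ is $S$-prime. To see that $N_{1}$ is $S_{1}$-prime, take $f_{1}\in S_{1}$ and $m_{1}\in M_{1}$ with $f_{1}(m_{1})\in N_{1}$, pick any $f_{2}\in S_{2}$, and evaluate $f=(f_{1},f_{2})$ at the test element $(m_{1},0_{M_{2}})$. Since $0_{M_{2}}\in N_{2}$, one gets $f((m_{1},0_{M_{2}}))=(f_{1}(m_{1}),0_{M_{2}})\in N$, so $S$-primeness yields $(m_{1},0_{M_{2}})\in N$ or $f(M)\subseteq N$; reading off the first coordinate gives $m_{1}\in N_{1}$ or $f_{1}(M_{1})\subseteq N_{1}$. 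The symmetric choice of test element $(0_{M_{1}},m_{2})$ shows that $N_{2}$ is $S_{2}$-prime.

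For part (2) the argument is identical to Proposition \ref{pro2in2}, with the single difference that one starts from $0_{M_{1}}\neq f_{1}(m_{1})\in N_{1}$. The nonvanishing of the first coordinate guarantees $0_{M}\neq(f_{1}(m_{1}),0_{M_{2}})$, so the $S$-weakly prime hypothesis on $N$ does apply to the test element $(m_{1},0_{M_{2}})$ and delivers the same dichotomy $m_{1}\in N_{1}$ or $f_{1}(M_{1})\subseteq N_{1}$, with symmetry handling $N_{2}$. The only step carrying genuine content beyond the earlier propositions is the factorization $S=S_{1}\times S_{2}$ in the decomposable setting, which is why I would make the idempotent computation explicit at the outset; everything after that is a transcription of the test-element proofs already used for Propositions \ref{pro1in2} and \ref{pro2in2}.
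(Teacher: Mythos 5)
Your proposal is correct and follows essentially the same route as the paper: the paper's own proof of Theorem \ref{theD2} consists precisely of the remark that the test-element arguments of Propositions \ref{pro1in2} and \ref{pro2in2} carry over, which is what you carry out. Your explicit idempotent computation showing $\mathrm{End}_{R}(M)=S_{1}\times S_{2}$ in the decomposable setting is a welcome extra justification (the paper leaves this identification implicit in its notation), but it does not change the substance of the argument.
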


\begin{proof}
$(1)$ : It can be easily proved similar to Proposition \ref{pro1in2}.

$(2)$ : Similar to Proposition \ref{pro2in2}.
\end{proof}

\end{document}